\DeclareMathOperator{\cl}{cl}
\DeclareMathOperator{\ran}{ran}
\DeclareMathOperator{\supp}{supp}
\DeclareMathOperator{\ext}{ext}
\DeclareMathOperator{\isom}{Isom}
\DeclareMathOperator{\spn}{span}
\DeclareMathOperator{\sgn}{sgn}
\DeclareMathOperator{\conv}{conv}
\renewcommand{\epsilon}{\scaleobj{1.3}{\varepsilon}}
\newcommand {\R}{\mathbb{R}}
\newcommand {\N}{\mathbb{N}}
\newcommand {\K}{\mathbb K}
\def\moverlay{\mathpalette\mov@rlay}
\def\mov@rlay#1#2{\leavevmode\vtop{%
   \baselineskip\z@skip \lineskiplimit-\maxdimen
   \ialign{\hfil$\m@th#1##$\hfil\cr#2\crcr}}}
\newcommand{\charfusion}[3][\mathord]{
    #1{\ifx#1\mathop\vphantom{#2}\fi
        \mathpalette\mov@rlay{#2\cr#3}
      }
    \ifx#1\mathop\expandafter\displaylimits\fi}
\newcommand {\per}{\mathbb P}
\newcommand {\permax}{\mathbb P^{\rm{MAX}}}
\DeclareRobustCommand{\rchi}{{\mathpalette\irchi\relax}}
\newcommand{\irchi}[2]{\raisebox{\depth}{$#1\chi$}} 
\numberwithin{equation}{section}
\theoremstyle{plain}
\newtheorem{theo}{Theorem}
\newtheorem{prop}[theo]{Proposition}
\newtheorem{lemma}[theo]{Lemma}
\theoremstyle{definition}
\newtheorem{defin}[theo]{Definition}
\newtheorem{example}[theo]{Example}
\theoremstyle{remark}
\newtheorem{rem}[theo]{Remark}
\numberwithin{theo}{section}
\title{Isometries of James-Schreier and Lorentz spaces}
\author{Christina Brech}
\address{Departamento de Matemática, Instituto de Matemática e Estatística, Universidade de São Paulo,
Rua do Matão, 1010, 05508-090, São Paulo, Brazil}
\email{brech@ime.usp.br}
\author{Victor dos Santos Ronchim}
\thanks{The first author was partially supported by FAPESP grants (2016/25574-8 and 2023/12916-1). The second author was supported by CNPq (150193/2022-0).} 
\address{Departamento Bioprocessos e Biotecnologia, Faculdade de Ciêncas Agronômicas, Universidade Estadual Paulista,
Av. Universitária, 3780, 18610-034, Botucatu, Brazil}
\email{victor.ronchim@unesp.br}
\date{\today}
\begin{document}
\begin{abstract}
We study the group of surjective linear isometries on certain real Banach sequence spaces using the preservation of extreme points in the closed unit ball. Our main result provides a characterization of the extreme points of the dual unit ball of the James-Schreier space $V_1$. As a consequence, we show that the only isometries on $V_1$ are $\pm Id$. We also obtain a Banach-Stone-type result for Lorentz sequence spaces, analogous to one proved in \cite{CarothersIsometrisLorentz} for Lorentz function spaces.
\end{abstract}
    
\subjclass[2020]{46B04,46B45} 
    
\keywords{surjective isometries, James-Schreier spaces, Lorentz sequence spaces, extreme points}

\maketitle
\section{Introduction}
One of the main topics in the development of mathematics is the study of isometries. They have been studied since the early days of geometry, with a focus on operators preserving lengths and angles. In the class of infinite dimensional Banach spaces, the characterization of isometries dates back to the publication of Banach's book \cite{banach1932theorie} in 1932, where all surjective isometries on some classical funtion spaces, such as $C[0,1]$, $L_p$ for $p\geq 1$, were described, and also stated that similar results would hold for the sequence spaces $c_0$ and $\ell_p$, without presenting the arguments. In this paper, we focus our attention on two types of real Banach sequence spaces: James-Schreier spaces and Lorentz spaces. Our main goal is to characterize the isometries of these spaces in terms of permutations of their canonical basis and changes of signs. 

Given a real Banach space $X$, let us denote by $\isom(X)$ the group of all surjective linear isometries $T:X\rightarrow X$. We say that $X$ has a \emph{trivial} group of isometries if every isometry is a multiple of the identity map $Id$, i.e., $\isom(X)=\{\pm Id\}$. If $X$ is a sequence space, following \cite{AntunesBeanlandsurvey}, we say that the group of isometries of $X$ is \emph{diagonal} if it consists of all operators $T: X \rightarrow X$ of the form:
\begin{equation}\label{definition diagonal group isom}
T(x_n) = (\epsilon_n x_{n})
\end{equation}
for some sequence of signs $(\epsilon_n)\in \{-1,1\}^{\N}$. In this case, $\isom(X)\cong \mathbb Z_2^\N$. Finally, $\isom(X)$ is \emph{standard} if it consists of all operators $T: X \rightarrow X$ of the form:
\begin{equation}\label{definition standard group isom}
T(x_n) = (\epsilon_n x_{\pi(n)})
\end{equation}
for some $(\epsilon_n)\in \{-1,1\}^{\N}$ and some $\pi\in S_{\infty}$, where $S_{\infty}$ denotes the set of all permutations of the natural numbers. Observe that if $X$ has a standard group of isometries, then $\isom(X)$ is isomorphic to a semidirect product of  $ Z_2^\N$ and $S_\infty$. Analogous definitions can be considered in the complex case, by replacing $\{-1,1\}$ by the unit circle in $\mathbb{C}$, but for simplicity, we restrict our study to real Banach spaces. 

Having a trivial group of isometries is not preserved under linear isomorphism. Indeed, $\isom(X)$ might change drastically after a renorming: Bellenot showed in \cite{BellenotRenorming} that every separable real Banach space can be renormed in such a way that its isometry group is trivial. However, it is an isometric property: if $X$ and $Y$ are isometrically isomorphic Banach spaces, then $\isom(X) = \{\pm Id\}$ if and only if $\isom(Y) = \{\pm Id\}$. Bellenot also proved in \cite{BellenotJames} that this is the case of the James space $J_2$. This result was improved in \cite{dongNiJames}, where generalized James spaces $J_p$, for $p>1$, are shown to have trivial group of isometries.

In contrast, the property of having a diagonal or standard group of isometries does not depend only on the isometric structure. To see this, notice that any Banach space $X$ with a Schauder basis $(e_n)$ can be identified with the sequence space
$$\left\{(x_n) \in \mathbb{R}^{\mathbb{N}}: \sum_{n=1}^\infty x_n e_n \text{ converges in }X\right\},$$ 
equipped with the norm $\Vert (x_n)\Vert = \Vert \sum_{n=1}^\infty x_n e_n \Vert$. Recall that $c_0$ and $\ell_p$, for $p\neq 2$, are known to have standard group of isometries under their usual representation, which is induced by their canonical basis. However, this situation changes when we represent $c_0$ using the summing basis $(s_n)$, where $s_n=\sum_{i=1}^n e_i$. The operator $T: c_0 \rightarrow c_0$ defined by $T(e_1)=-e_1$ and $T(e_n)=e_n$ for $n \geq 2$ is an isometry of the form \eqref{definition diagonal group isom} under the usual (canonical) representation. On the other hand, $T(s_2) = -e_1 + e_2 \neq \pm s_n$ for every $n \in \mathbb{N}$, so that $c_0$ fails to have standard group of isometries if we use the summing basis to represent the space as sequence space. Another example comes from the isometry $S((x_n)) = (\sum_{i=n}^\infty x_i)$ mapping $\ell_1$ onto the generalized James space $J_1$. The operador $T: \ell_1 \rightarrow \ell_1$ defined by $T(e_1)=-e_1$ and $T(e_n)=e_n$ for $n \geq 2$ is an isometry of $\ell_1$, so that $S \circ T \circ S^{-1}$ is an isometry of $J_1$ clearly not satisfying \eqref{definition diagonal group isom}. Despite this dependence on representation, we will omit the basis or representation when referring to the isometry group as diagonal or standard, unless they are not the canonical ones. 

In the middle ground are spaces whose isometry group is not trivial, but which exhibit more rigidity than those of the classical sequence spaces. In this class lie the generalized Schreier spaces and their $p$-convexifications $X_{\mathcal{S}_\alpha}^p$, for $p \geq 1$, which have diagonal group of isometries, see \cite{antunesBeanlandChuGeometry, AdBrechFerenczi, fakhoury2023isometries}. Further intermediate cases include, for example, the space $c$ of convergent sequences of real numbers. Its isometries satisfy \eqref{definition standard group isom}, but the sequence of signs must be, moreover, eventually constant, see \cite{AntunesBeanlandsurvey}. When replacing the generalized Schreier families $\mathcal{S}_\alpha$ by more general hereditary families $\mathcal{F}$ to get $X^p_\mathcal{F}$, not every permutation induce an isometry, see \cite{AdBrechFerenczi, BrechPina, fakhoury2023isometries}. All of these works rely heavily on two main ingredients: a characterization of the extreme points in the dual unit ball, and the fact that isometries must preserve disjointness of supports. For example, the extreme points of the James space $J_2$ were characterized in \cite{BellenotJames} and, more recently, in \cite{ArgyrosGonzalez}. In the case of the Schreier space $X_\mathcal{S}$, the extreme points were characterized in \cite{AdBrechFerenczi}. 

Our main result, Theorem \ref{teo: extreme points in BV1*}, characterizes the extreme points of the dual unit ball of the James-Schreier space $V_1$. The \emph{James-Schreier} spaces $V_p$, for $p\geq 1$, were introduced and studied in \cite{JSI,JSII,JSremarks}. These spaces amalgamate the James space $J_p$ and the Schreier space in a similar way as the sequence space $\ell_p$ and the Schreier space are used to define the combinatorial Banach spaces $X_\mathcal{F}^p$. The James-Schreier spaces were constructed to produce a new example of a Banach algebra with bounded approximate identity. The authors prove that, despite having quite different Banach space properties, almost all results regarding the James space as a Banach algebra carry over to the James-Schreier spaces. For instance, $V_p$ and $J_p$ are weakly amenable but not amenable (see \cite{JSII}), $V_p$ is $c_0$-saturated, $J_p$ is $\ell_p$-saturated and James spaces are totally incomparable with both the Schreier and the James-Schreier spaces. While each James-Schreier space contains a complemented copy of its respective Schreier space, it does not embed into any Schreier space (see \cite{JSI}). To prove Theorem~\ref{teo: extreme points in BV1*}, we use the fact that the standard Schauder basis of $V_1$ is shrinking (see \cite{JSremarks}) along with an analysis of the unit ball of $V_1$ to characterize the extreme points of the dual unit ball $B_{V_1^*}$ in terms of their support and the values of the coordinates therein. We then use Theorem~\ref{teo: extreme points in BV1*} to prove Theorem~\ref{theo: V1 has trivial group of isometries}, which states that $V_1$ has a trivial group of isometries. This shows that, despite being built upon the Schreier family and the norm structure of the James space, the isometric structure of the James-Schreier space $V_1$ does not resemble that of either of the spaces amalgamated. This is not surprising, since results from \cite{JSremarks} show that the Schreier space $X_\mathcal{S}$ and the James-Schreier space $V_1$ have very different isomorphic structures.

In the study of linear surjective isometries between Banach spaces, one can take a more general perspective and seek for Banach-Stone-type results within a class of Banach spaces: knowing that two of its Banach spaces are isometric, what can be said about their common structure? The classical Banach-Stone Theorem states that two Banach spaces of continuous functions $C(K)$ and $C(L)$ are isometrically isomorphic if and only if $K$ and $L$ are homeomorphic. In the class of $\ell_p$ spaces, $\ell_p$ and $\ell_q$ are isometrically isomorphic if, and only if, $p=q$. A similar result follows from results from \cite{JSremarks} in the context of the $p$-convexification of Schreier spaces $X^p_{\mathcal{S}}$ and James-Schreier spaces $V_p$. In \cite{BrechPina} and \cite{fakhoury2023isometries}, Banach-Stone-type theorems were obtained in the context of combinatorial spaces $X_{\mathcal{F}}$ and their $p$-convexifications $X^p_{\mathcal{F}}$, respectively. 

In addition to our main results about the James-Schreier space, we analyze the isometries between Lorentz sequence spaces. The Lorentz sequence space $d(w,1)$, determined by a decreasing weight sequence $w$, is a classical sequence space having a unique (up to equivalence) symmetric Schauder basis (see \cite[Section 4.e]{LindenstraussTzafriri1}), and some of its geometric properties were studied in \cite{DentingPoints,CarothersIsometrisLorentz,ciesielskisequencelorentzstructure}. The extreme points of the unit ball of Lorentz spaces were characterized in \cite[Theorem 2.6]{AnnaLeeExtreme}. We use this characterization to prove Theorem \ref{theo: iso dv,dw are induced by permutation and scalar}, which states that two Lorentz spaces determined by strictly decreasing sequences of weights are isometric if and only if the sequences of weights are proportional. This can be seen as a Banach-Stone-type theorem, which agrees with analogous results obtained in \cite{CarothersIsometrisLorentz} for the Lorentz function space $L_{w,p}$. As consequences, we get that the isometry group of both the Lorentz space $d(w,1)$ and its predual $d_*(w,1)$ are standard, see Theorem \ref{theo: isometria standard predual lorentz}. These facts are probably well-known, as they follow from results in \cite{BravermanSemenov}. 

The paper is divided into two independent and self-contained parts. The first part is devoted to the results related to the James-Schreier space and consists of Sections \ref{sec V1 space}, \ref{sec extreme points v1*} and \ref{sec isometries V1}. In Section \ref{sec V1 space} we introduce the James-Schreier space $V_1$, an amalgamation of two classical Banach sequence spaces originally introduced by Bird and Laustsen in \cite{JSI}, and we present several technical lemmas concerning the evaluation of coordinates of functionals in the dual unit ball $B_{V_1^*}$. Section \ref{sec extreme points v1*} provides the main result (Theorem \ref{teo: extreme points in BV1*}), which charaterizes the extreme points of $B_{V_1^*}$, while its implications for the isometries are proved in Section \ref{sec isometries V1}. The second part, Section \ref{sec lorentz space}, is devoted to proving a Banach-Stone-type result for the Lorentz sequence spaces. 

We will adopt the following notation for Banach space theory: given a Banach space $X$, $X^*$ denotes its dual space, $B_X$ the closed unit ball of $X$, $S_X$ the unit sphere of $X$ and $\ext(B_X)$ the set of all extreme points in $B_X$, that is, the points $x\in B_X$ such that whenever $x = \tfrac{\alpha+\beta}{2}$ with $\alpha,\beta\in B_X$, then $x=\alpha=\beta$. Given a Banach space $X$ with a Schauder basis $(e_n)$ and $x =\sum_{i=1}^{\infty}\lambda_ie_i\in X$, the support of $x$ with respect to $(e_n)$ is the set $\supp x = \{i \in \mathbb{N}: \lambda_i \neq 0\}$. We also denote $\lambda_i$ by $x_i$ or $e_i^*(x)$. When $(e_n)_n$ is a shrinking basis of $X$, the sequence $(e_n^*)_n$ is a basis for $X^*$ and we denote the coordinates of a vector $x^*\in X^*$ by $x^*_i$ or $x^*(e_i)$. For a standard reference on Banach space theory, we refer the reader to \cite{megginsonbook}. $\mathbb{N}$ denotes the set of strictly positive integers. We refer to the book \cite{FlemingJamison} and the survey \cite{AntunesBeanlandsurvey} for detailed presentations on the study of isometries over function spaces and over sequence spaces, respectively.

\section{The James-Schreier space \texorpdfstring{$V_1$}{V1} and its dual}\label{sec V1 space}

The James-Schreier spaces $V_p$, for $p\geq 1$, were introduced and studied in \cite{JSI,JSII,JSremarks}. In this work we will focus on the James-Schreier space $V_1$. Let us recall its definition. 

\begin{defin}
    We say that a subset $A\subseteq \N$ is permissible if $2\leq |A| \leq \min A +1$. Denote by $\per$ the family of all permissible sets and $\permax$ the permissible sets that are maximal with respect to inclusion.

The definition of the Schreier family varies from reference to reference, but we can think of permissible sets as corresponding to those elements of the Schreier family which are not singletons.  
    It will be convenient to enumerate any nonempty subset $F$ of some permissible set $A$ as $F=\{n_1< n_2<\cdots < n_{k+1}\}$, where $0 \leq k\leq n_1$. $F$ is a singleton when $k=0$; it is an element of $\per$ when $1 \leq k\leq n_1$; and $F$ is an element of $\permax$ when $k=n_1$.
    
Given a sequence $x= (x_n)$ of real numbers, denoting $\nu(x,A) \doteq \sum_{j=1}^k |x_{n_j} - x_{n_{j+1}}|$ for each $A\in \per$, we define:

\[
\| x\|_{W_1} = \sup_{A\in \per} \nu(x,A) \leq \infty \quad \text{and}\quad W_1\doteq \{x\in c_0: \|x\|_{W_1}<\infty\}.
\]

It follows that $(W_1, \|\cdot\|_{W_1})$ is a Banach space. Moreover, if 
$(e_n)$ is the sequence of standard unit vectors in $c_{00}$, then each $e_n$ clearly belongs to $W_1$, so that we can consider its closed subspace $V_1\doteq \overline{\spn}\{e_n: n\in \N\} \subseteq W_1$, which is called the James-Schreier space (for $p=1$).
\end{defin}

From now on, we will omit the subscript $W_1$ in the norm of $V_1$. The following facts about the James-Schreier space hold: 
in \cite{JSI} the authors prove that $(e_n)$ is a Schauder basis of $V_1$ which is not unconditional and that $V_1$ is $c_0$-saturated. Moreover, it is proved in \cite{JSremarks} that the basis $(e_n)$ is shrinking, so that the sequence of biorthogonal functionals $(e_n^*)$ is a Schauder basis of $V_1^*$.

Our main purpose is to describe the group of surjective linear isometries of $V_1$, $\isom(V_1)$, in terms of permutations of the basis $(e_n)$ and possible changes of signs. To achieve our goal, we will characterize the extreme points of dual unit ball $B_{V_1^*}$ and describe $\isom(V_1^*)$. 

\begin{defin}
We say that a functional $x^*\in B_{V_1^*}$ is compatible if there are $k\geq 0$ and $F=\{n_1<\ldots<n_{k+1}\} \subseteq A\in \per$ such that:
$$x^* = \pm \left(\sum_{i=1}^{k+1} (-1)^i\lambda_i e_{n_i}^* \right) \text{ where }
\left\{\begin{array}{l}
\lambda_1=1, \\
\lambda_i=2 \text{ if } 1<i<k+1,\\
\lambda_{k+1} \in \{1,2\} \text{ and }\lambda_{k+1} = 1 \text{ if }F \in \permax.
\end{array}\right.$$
\end{defin}

Notice that if $x^* = \sum_{i=1}^\infty x^*_i e_i^*$ is a compatible functional, then any partial sum $\sum_{i=1}^m x^*_i e_i^*$ is also a compatible functional, if $m \geq \min \supp x^*$. 

The following example will be helpful to estimate the norm of vectors:
\begin{example} \label{exe: f-a-theta}
    Given $A=\{n_1,n_2\ldots,n_{k+1}\}\in\per$ and a sequence $\vec \theta = (\theta_1,\ldots,\theta_{k})$ of signs in $\R$, the functional $x^*_{A,\vec\theta}\doteq \sum\limits_{i=1}^k \theta_i(e_{n_i}^* - e_{n_{i+1}}^*) \in V_1^*$ is compatible. Moroever, every compatible functional $x^* = \pm \left(\sum_{i=1}^{k+1} (-1)^i\lambda_i e_{n_i}^* \right)$ with $k\geq 1$ and $\lambda_{k+1}=1$ is of the form $x^*_{A,\vec\theta}$ for some $A \in \per$ and some $\vec{\theta}$.

    In fact, it is straightforward to check that $x^*_{A,\vec\theta}\in B_{V_1^*}$, furthermore if one writes:
\[
x^*_{A,\vec\theta}=
\theta_1 e_{n_1}^* + \underbrace{(\theta_2 - \theta_1)}_{\text{0, 2 or -2}}e_{n_2}^* +\cdots+ \underbrace{(\theta_k - \theta_{k-1})}_{\text{0, 2 or -2}}e_{n_k}^* - \theta_k e_{n_{k+1}}^*,
\]
it is easy to verify that consecutive coordinates on $\supp\big(x^*_{A,\vec\theta}\big)$ have alternating signs, thus it is a compatible functional.
\end{example}

\begin{prop}\label{prop: M sign invariand and weak*closed}
    The set $M = \left\{ x^* \in V_{1}^*: x^* \text{ is compatible} \right\}\cup\{0\}$ is a norming set that is symmetric and weak$^*$-closed.
\end{prop}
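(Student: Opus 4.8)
The symmetry of $M$ is immediate and I would dispose of it first: every compatible functional occurs together with its negative because of the leading $\pm$ in the definition, and $0=-0$, so $-M=M$. For the norming property I would prove $\|x\|=\sup_{y^*\in M}y^*(x)$ for every $x\in V_1$. The inequality ``$\le$'' is clear, since every element of $M$ lies in $B_{V_1^*}$ (compatible functionals do so by definition, and $0\in B_{V_1^*}$), whence $y^*(x)\le\|x\|$ for all $y^*\in M$. For ``$\ge$'', given a permissible set $A=\{n_1<\cdots<n_{k+1}\}$ I would choose $\theta_i=\sgn(x_{n_i}-x_{n_{i+1}})$ and use the functional $x^*_{A,\vec\theta}$ of Example~\ref{exe: f-a-theta}, which is compatible (hence in $M$) and satisfies $x^*_{A,\vec\theta}(x)=\sum_{i=1}^k|x_{n_i}-x_{n_{i+1}}|=\nu(x,A)$. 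Taking the supremum over $A\in\per$ recovers $\|x\|=\sup_{A\in\per}\nu(x,A)$, and symmetry upgrades this to $\|x\|=\sup_{y^*\in M}|y^*(x)|$.

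The substantial point is weak$^*$-closedness. Since $V_1$ has a Schauder basis it is separable, so $(B_{V_1^*},w^*)$ is compact and metrizable; as $M\subseteq B_{V_1^*}$, it suffices to show $M$ is weak$^*$-sequentially closed. I would take $(x^*_n)\subseteq M$ with $x^*_n\to x^*$ weak$^*$, so that $(x^*_n)_j\to x^*_j$ for each $j$. The key observation is that every nonzero coordinate of a compatible functional has absolute value $1$ or $2$; hence each coordinate sequence $((x^*_n)_j)_n$ takes values in the finite set $\{-2,-1,0,1,2\}$ and, being convergent, is eventually constant, with $x^*_j\in\{-2,-1,0,1,2\}$. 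Discarding the trivial case $x^*=0$ (where $0\in M$), I set $j_0=\min\supp x^*$ and argue from coordinatewise eventual constancy that $\min\supp x^*_n=j_0$ for all large $n$, so the leading coordinate $\lambda_1=1$ yields $|x^*_{j_0}|=1$. Writing the permissible superset $A_n\supseteq\supp x^*_n$, permissibility gives $|\supp x^*_n|\le|A_n|\le\min A_n+1\le j_0+1$ for large $n$; together with eventual constancy this forces $F:=\supp x^*$ to be finite with $|F|\le j_0+1$, so $F$ is either a singleton or a permissible set with $\min F=j_0$.

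It then remains to read off the magnitudes and signs. Writing $F=\{p_1<\cdots<p_l\}$, any support element of $x^*_n$ lying outside $F$ must escape to infinity (a fixed coordinate outside $F$ tends to $0$, hence is eventually $0$), so for large $n$ all such extra coordinates exceed $\max F$. Consequently every interior point $p_i$ with $1<i<l$ is interior in $\supp x^*_n$, giving $|(x^*_n)_{p_i}|=2$, while consecutive coordinates of a compatible functional alternate in sign, so the signs on $p_1,\dots,p_l$ alternate; passing to the limit produces the alternating pattern with $|x^*_{p_1}|=1$, $|x^*_{p_i}|=2$ for interior $i$, and $|x^*_{p_l}|\in\{1,2\}$. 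For the maximality clause, if $F\in\permax$ then $l=j_0+1$, so the bound $|\supp x^*_n|\le j_0+1$ combined with $F\subseteq\supp x^*_n$ forces $\supp x^*_n=F\in\permax$ for large $n$; compatibility of $x^*_n$ then forces its last coordinate to have absolute value $1$, so $|x^*_{p_l}|=1$ in the limit. Thus $x^*$ is compatible, i.e. $x^*\in M$.

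I expect the main obstacle to be precisely this last bookkeeping inside the weak$^*$ limit: controlling where the ``extra'' support elements of $x^*_n$ go and correctly matching the boundary coordinate, since the compatibility distinction between $\lambda_{k+1}\in\{1,2\}$ and $\lambda_{k+1}=1$ must be synchronized with whether $\supp x^*_n$ equals $F$ and whether $F$ is maximal permissible. The symmetry and norming parts, by contrast, are short and follow directly from Example~\ref{exe: f-a-theta} and the definition of the norm.
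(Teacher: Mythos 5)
Your proposal is correct and follows essentially the same route as the paper: the norming and symmetry parts use the functionals $x^*_{A,\vec\theta}$ of Example~\ref{exe: f-a-theta} exactly as in the text, and the weak$^*$-closedness rests on the same key observation that the coordinates of compatible functionals lie in $\{0,\pm1,\pm2\}$, combined with the permissibility bound on the size of the support. The only cosmetic difference is that you pass to sequences via metrizability of $(B_{V_1^*},w^*)$ and re-verify each clause of compatibility for the limit, whereas the paper works with a single weak$^*$-neighbourhood of radius $\tfrac12$ and identifies the limit as a truncation of one nearby compatible functional.
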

\begin{proof}
Observe that $M$ is a norming set because in Example~\ref{exe: f-a-theta} we proved that each $x^*_{A,\vec\theta}$ is compatible. Hence, given any $x = (x_n)\in V_1$ and $A =\{n_1 < \dots < n_{k+1}\}\in \per$, let $\theta_i = \sgn (x_{n_i}-x_{n_{i+1}})$ and notice that $\nu(x,A)=x^*_{A,\vec\theta}(x)$. Moreover, it is clear that $M$ is symmetric. We shall prove that $M$ is weak$^*$-closed.

Let $x^*\in \overline{M}^{w^*}$ with $x^*\neq 0$, we will prove that $x^*$ is compatible. Because the canonical basis $(e_n)$ of $V_1$ is shrinking (see \cite{JSremarks}), we can write $x^* = \sum\limits_{i=1}^\infty x^*(e_i) e_i^*$ with $x^*(e_i)\rightarrow 0$. It follows that:
\[
\text{for all } i\in\N:\quad 
|x^*(e_i)| \in \overline{\left\{ |y^*(e_i)|: y^* \in M\right\}} = \{0,1,2\},
\]
thus, $\supp x^*$ is finite. Let $S\doteq \supp x^* = \{n_1,\ldots, n_{k+1}\}$, with $k\geq 0$.

We consider the basic weak$^*$-open neighbourhood of $x^*$ given by:
\[ U\doteq U\left(x^*, \{ e_1, \ldots , e_{n_{k+1}}\}, \frac{1}{2}\right) = \left\{ y^* \in V_1^*: \max\limits_{i\leq n_{k+1}} |(y^*-x^*)(e_i)|<\frac{1}{2} \right\}.
\]
Let $y^*\in M \cap U$ and since $y^*$ is compatible, we obtain that:
\[
\text{if } i\leq n_{k+1}: \begin{cases}
    i\in S \implies |y^*(e_i) - x^*(e_i)|<\frac{1}{2} \implies y^*(e_i) = x^*(e_i); \\
    i\notin S \implies |y^*(e_i)| = |y^*(e_i) - x^*(e_i)| <\frac{1}{2} \implies y^*(e_i) = x^*(e_i) = 0.
\end{cases}
\]

In particular, $S \subseteq \supp y^*$ and for every $i \in S$, $x^*(e_i) = y^*(e_i)$. Moreover, if $S \in \permax$, then $S=\supp y^*$ and, therefore, $|x^*(e_{n_{k+1}})| = 1$. It follows easily that, for $m = \max \supp y^*$, $x^* = \sum_{n=1}^{m} y_n^* e_n^*$, which is a compatible functional.
\end{proof}

We remember the following result, stated as Lemma 4 of \cite{AdBrechFerenczi}:

\begin{lemma}\label{lemma: norming set conv}
    Let $X$ be a Banach space over $\K$ and $N\subseteq B_{X^*}$ be a sign invariant norming set for $X$. Then $B_{X^*} = \overline{\conv(N)}^{w^*}$.
\end{lemma}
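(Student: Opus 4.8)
> Let $X$ be a Banach space over $\K$ and $N\subseteq B_{X^*}$ be a sign invariant norming set for $X$. Then $B_{X^*} = \overline{\conv(N)}^{w^*}$.

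Let me think about how to prove this.\textbf{Plan.} The statement is a standard consequence of the Hahn--Banach separation theorem combined with the Banach--Alaoglu theorem, so the plan is to establish the two inclusions separately, with the nontrivial direction handled by a geometric separation argument. Write $C \doteq \overline{\conv(N)}^{w^*}$. The easy inclusion is $C \subseteq B_{X^*}$: since $N \subseteq B_{X^*}$ and $B_{X^*}$ is convex and weak$^*$-closed (by Banach--Alaoglu it is weak$^*$-compact, hence weak$^*$-closed), it contains $\conv(N)$ and its weak$^*$-closure, giving $C \subseteq B_{X^*}$. The whole content of the lemma is the reverse inclusion $B_{X^*} \subseteq C$.

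\textbf{The main step (reverse inclusion).} I would argue by contradiction: suppose there exists $x^* \in B_{X^*}$ with $x^* \notin C$. The set $C$ is convex and weak$^*$-closed, and $\{x^*\}$ is weak$^*$-compact and convex, so by the Hahn--Banach separation theorem \emph{in the weak$^*$-topology}, there is a weak$^*$-continuous linear functional strictly separating them. The crucial point is that the weak$^*$-continuous linear functionals on $X^*$ are exactly the evaluations $y^* \mapsto y^*(x)$ for $x \in X$; this is the standard identification of the predual and is what makes the separation usable. Thus there exist $x \in X$ and $\alpha \in \R$ with
\[
\operatorname{Re} y^*(x) \leq \alpha < \operatorname{Re} x^*(x) \quad \text{for all } y^* \in C.
\]
Since $N \subseteq C$ and $N$ is sign invariant, for each $y^* \in N$ we also have $-y^* \in N \subseteq C$ (in the complex case, all unimodular multiples lie in $N$), so applying the inequality to the appropriate sign/phase of $y^*$ yields $|y^*(x)| \leq \alpha$ for all $y^* \in N$. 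Taking the supremum over $N$ and invoking the fact that $N$ is \emph{norming} gives $\Vert x \Vert = \sup_{y^* \in N} |y^*(x)| \leq \alpha$. On the other hand $x^* \in B_{X^*}$ gives $\operatorname{Re} x^*(x) \leq |x^*(x)| \leq \Vert x^* \Vert\, \Vert x \Vert \leq \Vert x \Vert \leq \alpha$, contradicting $\alpha < \operatorname{Re} x^*(x)$. Hence no such $x^*$ exists and $B_{X^*} \subseteq C$.

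\textbf{Where the difficulty lies.} The routine calculations (the two chains of inequalities above) are not the obstacle; the delicate points are the two facts that must be invoked correctly. First, one must use the \emph{weak$^*$} version of Hahn--Banach separation together with the identification of weak$^*$-continuous functionals with point evaluations from $X$ — separating with a functional from all of $X^{**}$ would be useless, since only elements of $X$ interact with the norming hypothesis. Second, one must combine sign invariance with the norming property in exactly the right way: sign invariance is precisely what upgrades the one-sided separating inequality $\operatorname{Re} y^*(x) \leq \alpha$ into the two-sided bound $|y^*(x)| \leq \alpha$, after which ``norming'' closes the argument. I expect the cleanest writeup to keep the real and complex cases uniform by phrasing the sign-invariance step as: multiplying $y^*$ by a suitable scalar of modulus $1$ keeps it in $N$ and makes $y^*(x)$ real and nonnegative.
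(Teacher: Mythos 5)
Your argument is correct: the two inclusions, the weak$^*$ Hahn--Banach separation via an evaluation at some $x\in X$, the use of sign invariance to upgrade the one-sided bound to $|y^*(x)|\leq\alpha$, and the norming property to reach a contradiction are exactly the standard proof of this fact. The paper itself does not prove the lemma (it is quoted as Lemma 4 of \cite{AdBrechFerenczi}), and your write-up matches the usual argument given there, so there is nothing to flag.
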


Notice that for real Banach spaces, being sign invariant is the same as being symmetric. As a consequence of Proposition~\ref{prop: M sign invariand and weak*closed}, Lemma~\ref{lemma: norming set conv} and Milman's Theorem (see \cite[Theorem 3.25]{RudinFA}), it follows that $\ext\big(B_{V_1^*}\big)\subseteq M$. 

In what follows, our main goal is to determine which elements of $M$ are extreme points of $B_{V_1^*}$. We present preliminary results which will be used in Section~\ref{sec extreme points v1*} to show that certain compatible functionals are indeed extreme points. Given a compatible functional $x^* = \sum\limits_{i=1}^{k+1} x_{n_i}^* e_{n_i}^*$, we start with $\alpha,\beta\in B_{V_1^*}$ such that $\alpha + \beta = 2x^*$ and show that, under certain assumptions,  $\alpha_j=\beta_j= x_j^*$ for every $j \in \mathbb{N}$. Evaluating these functionals on specific vectors in the unit ball of $X$ will be crucial for that purpose. 

Notice that if $x\in V_1$, then $x \in c_0$, so that
$$|x(i)| = \lim_{j \rightarrow \infty} \nu(x,\{i,j\}) \leq \sup_{A \in \mathbb{P}} \nu(x,A) = \Vert x \Vert.$$
Therefore, $\|x\|_\infty \leq \|x\|$, thus $B_{V_1}\subseteq [-1,1]^\N$.

Let us introduce some further notation. Given a set $A\subseteq \N$, let $\rchi_{A} \doteq \sum_{j\in A} e_j$ and $x^*|_A \doteq x^*|_{\spn\{e_i: \,i\in A\}}$,  when $x^*\in V_1^*$. We also use interval notation within $\mathbb{N}$, e.g. $[i,j]=\{k \in \mathbb{N}: i \leq k \leq j\}$ and $]i,+\infty[=\{k \in \mathbb{N}: i < k \}$.

We start by pointing out a collection of vectors which are in $B_{V_1}$:

\begin{lemma}\label{lemma: bola}
The following vectors are in the unit ball $B_{V_1}$:
\begin{enumerate}
    \item $e_i$, for $i \leq 2$;
    \item $\frac{e_i}{2}$, for $i\geq 3$;
    \item $e_1 + \tfrac{e_j}{2}$, for $j \geq 2$;
    \item $\tfrac{e_i\pm e_j}{2}$, for $i \leq 2 \leq j$;
    \item $\rchi_{[1,j]}$, for $j\geq 2$;
    \item $\frac{\rchi_{[i,j]}}{2}$, for $2 \leq i<j$.
\end{enumerate}
\end{lemma}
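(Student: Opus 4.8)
The plan is to prove membership in $B_{V_1}$ directly from the definition of the norm. Since $\|x\| = \sup_{A\in\per}\nu(x,A)$, it suffices to check that $\nu(x,A)\le 1$ for every permissible set $A=\{n_1<\cdots<n_{k+1}\}$. Because $\nu(x,A)$ depends only on the coordinates of $x$ indexed by $A$, and each of the six vectors has either a one- or two-point support or an interval support, this reduces to a finite bookkeeping of the possible positions of $A$ relative to $\supp x$.

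The combinatorial engine is the defining inequality $|A|\le \min A+1$. First I would record its consequence for small indices: if $1\in A$ then $\min A=1$ forces $|A|\le 2$, so $A=\{1,m\}$; and if $2\in A$ with $2\ne\min A$, then $1\in A$ and again $A=\{1,2\}$. Hence an index $\le 2$ can occur in a permissible set only as its minimum $n_1$, i.e.\ as a left endpoint, and never as an interior index $n_l$ with $1<l<k+1$. I would combine this with the elementary estimate that a single nonzero coordinate $x_p=v$ contributes $|v|$ to $\nu(x,A)$ when $p$ is an endpoint ($p=n_1$ or $p=n_{k+1}$) and at most $2|v|$ when $p$ is interior, the two surrounding differences being controlled by $|v|$ together with the adjacent coordinates of $x$.

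With these two facts the cases follow by inspection. For (1) the single coordinate $1$ sits at an index $\le 2$, hence at an endpoint, contributing at most $1$. For (2) the coordinate $\tfrac12$ at index $i\ge 3$ may be interior, contributing at most $2\cdot\tfrac12=1$. For (5), along any permissible $A$ the values of $\rchi_{[1,j]}$ form a nonincreasing $0/1$ pattern, since the $1$'s occupy the initial segment $[1,j]$, so there is at most one $1\to 0$ transition and $\nu(x,A)\le 1$; for (6), the interval $[i,j]$ produces a $0\,\tfrac12\,0$ pattern with at most two transitions, giving $\nu(x,A)\le 2\cdot\tfrac12=1$. For the two-support vectors (3) and (4), I would split on which of the two support points lie in $A$, using that the index $\le 2$ is forced to equal $n_1=\min A$, and then examine the finitely many resulting configurations.

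The main obstacle, and the only place requiring genuine care, is this last finite analysis for the vectors with two separated support points: here one must track the signs of the two coordinates and the placement of the second (possibly interior) index, making sure no admissible configuration pushes the total variation above $1$. This is precisely where the endpoint constraint on indices $\le 2$ and the permissibility bound $|A|\le\min A+1$ must be used in tandem, since an interior index adjacent to an opposite-signed endpoint coordinate is exactly the situation in which the variation is largest; everything else reduces to a routine transition count.
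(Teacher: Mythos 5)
The paper states this lemma without proof, so your strategy of direct verification from the definition of $\nu$ is the intended one, and your treatment of items (1), (2), (3), (5) and (6) is correct as sketched: an index $\le 2$ is never \emph{interior} to a permissible set (though note that $2$ can be the maximum of $A=\{1,2\}$ rather than its minimum, so your phrasing ``only as its minimum $n_1$'' is slightly off while the operative endpoint fact survives), a single coordinate of modulus $v$ contributes at most $2v$ from an interior position and at most $v$ from an endpoint, and the interval vectors have monotone or unimodal coordinate patterns along any $A$, hence at most one or two transitions.

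The case you defer --- item (4), which you rightly single out as the only delicate one --- cannot, however, be completed as stated, because one of the asserted vectors is in fact \emph{not} in $B_{V_1}$. Take $i=2$, $j\ge 3$ and the minus sign, i.e.\ $x=\tfrac{e_2-e_j}{2}$, and let $A=\{2,j,p\}$ for any $p>j$. Then $|A|=3=\min A+1$, so $A\in\per$, and
\[
\nu(x,A)=\Bigl|\tfrac12-\bigl(-\tfrac12\bigr)\Bigr|+\Bigl|-\tfrac12-0\Bigr|=\tfrac32>1 .
\]
This is exactly the configuration you flag as dangerous (an interior index carrying a coordinate of sign opposite to that of the endpoint $n_1=2$); carrying out the finite analysis shows that it genuinely exceeds $1$ rather than being controlled. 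So your plan stalls at this point, and the correct conclusion is that item (4) must be restricted: $\tfrac{e_i+e_j}{2}\in B_{V_1}$ for all $i\le 2\le j$, and $\tfrac{e_1\pm e_j}{2}\in B_{V_1}$ for all $j\ge 2$, but $\tfrac{e_2-e_j}{2}\notin B_{V_1}$ once $j\ge 3$. This restriction is harmless for the rest of the paper, which only ever evaluates functionals on $\tfrac{e_i+e_j}{2}$ with $i\le 2$ (Lemma~\ref{lemma: coordinates}) and on $\tfrac{e_1\pm e_j}{2}$ (Proposition~\ref{Prop: extV1* |x_n{k+1}|=1}), but your proof attempt should either record the corrected statement or stop at the counterexample.
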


In what follows, we will use Lemma \ref{lemma: bola} several times without explicit mention. Any vector which is claimed to be in $B_{V_1}$ should be listed above. The following lemmas, despite being simple, exemplify the use of these vectors to compute certain coordinates.

\begin{lemma}\label{lemma: coordinates}
Given $\alpha, \beta \in B_{V_1^*}$, the following assertions hold:
\begin{enumerate}
    \item If $i \leq 2$ and $\alpha_i + \beta_i = \pm 2$, then $\alpha_i=\beta_i=\pm 1$.
    \item If $i > 2$, $\alpha_i+\beta_i=\pm2$ and $\alpha|_{[1,i[} = \beta|_{[1,i[} \equiv 0$, then $\alpha_i=\beta_i=\pm1$.   
    \item If $i \leq 2$, $\alpha_i + \beta_i = 0$ and $\alpha_j=\beta_j=\pm2$ for some $j>2$, then $\alpha_i=\beta_i=0$.
\end{enumerate}
\end{lemma}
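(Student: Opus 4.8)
\section*{Proof proposal}

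The plan is to exploit the duality inequality $|\gamma(x)|\leq 1$, valid for every $\gamma\in B_{V_1^*}$ and every $x\in B_{V_1}$, testing $\alpha$ and $\beta$ against carefully chosen vectors from the list in Lemma~\ref{lemma: bola}. In each item the strategy is identical: select a vector whose pairing with a functional isolates the coordinate $\alpha_i$ (or $\beta_i$) and produces a bound sharp enough to combine with the hypothesis $\alpha_i+\beta_i=\pm 2$ (respectively $=0$). For item (1), since $i\leq 2$ the vector $e_i$ itself lies in $B_{V_1}$, so $|\alpha_i|=|\alpha(e_i)|\leq 1$ and likewise $|\beta_i|\leq 1$; if $\alpha_i+\beta_i=2$ then both summands, being at most $1$, must equal $1$, and the case $-2$ is symmetric.

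For item (2) the bound $|\alpha_i|\leq 1$ is no longer immediate, because for $i>2$ only $e_i/2$ appears in the list. Instead I would test against $\rchi_{[1,i]}\in B_{V_1}$ (vector type (5), since $i\geq 2$). The hypothesis $\alpha|_{[1,i[}\equiv 0$ collapses the pairing to the single surviving term $\alpha(\rchi_{[1,i]})=\sum_{j=1}^{i}\alpha_j=\alpha_i$, so $|\alpha_i|\leq 1$, and the same holds for $\beta$. The conclusion then follows exactly as in item (1).

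For item (3), assume without loss of generality that $\alpha_j=\beta_j=2$ (the value $-2$ is handled by replacing $e_j$ with $-e_j$). Both $\tfrac{e_i+e_j}{2}$ and $\tfrac{e_i-e_j}{2}$ belong to $B_{V_1}$ (vector type (4), since $i\leq 2<j$). Pairing $\alpha$ with the first gives $\left|\tfrac{\alpha_i+2}{2}\right|\leq 1$, hence $\alpha_i\leq 0$; pairing with the second gives $\left|\tfrac{\alpha_i-2}{2}\right|\leq 1$, hence $\alpha_i\geq 0$. Therefore $\alpha_i=0$, and the identical argument yields $\beta_i=0$, consistently with $\alpha_i+\beta_i=0$.

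The computations are elementary, so there is no deep obstacle here; the only genuine content lies in the choice of test vectors. The mildly delicate point is item (3), where one must use \emph{both} signs in $\tfrac{e_i\pm e_j}{2}$ to pincer $\alpha_i$ from above and below, while keeping track of the sign of $\alpha_j=\beta_j=\pm 2$ so that the two estimates combine to force the coordinate to vanish.
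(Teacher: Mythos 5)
Your proof is correct and follows essentially the same route as the paper: items (1) and (2) use exactly the test vectors $e_i$ and $\rchi_{[1,i]}$, and item (3) uses the vectors $\tfrac{e_i\pm e_j}{2}$. The only cosmetic difference is in (3), where the paper tests only against $\tfrac{e_i+e_j}{2}$ to get the one-sided bounds $\alpha_i,\beta_i\leq 0$ (or $\geq 0$) and then invokes $\alpha_i+\beta_i=0$, whereas you pincer each coordinate with both signs and do not need that hypothesis to conclude.
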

\begin{proof}
In (1) and (2), we get that $|\alpha_i|, |\beta_i| \leq 1$ by evaluating $\alpha$ and $\beta$ on the unit vectors $e_i$ and $\rchi_{[1,i]}$, respectively. It follows that $\alpha_i=\beta_i=\pm1$. To prove (3), we evaluate $\alpha$ and $\beta$ on $\tfrac{e_i+ e_j}{2} \in B_{V_1}$, obtaining either $\left|\frac{\alpha_i}{2} +1\right|, \left|\frac{\beta_i}{2} +1\right|\leq 1$ or $\left|\frac{\alpha_i}{2} -1\right|, \left|\frac{\beta_i}{2} -1\right|\leq 1$, and both alternatives imply that $\alpha_i=\beta_i=0$. 
\end{proof}

\begin{lemma}\label{lemma: functionals coincide on ]n1,nk+1[}
    Let $x^* = \sum\limits_{i=1}^{k+1} x_{n_i}^* e_{n_i}^*$ be a compatible functional and $\alpha,\beta \in B_{V_1^*}$ be such that $\alpha+\beta = 2x^*$. 
    \begin{enumerate}
        \item If $|\supp x^*|>2$, then $\alpha|_{]n_1,n_{k+1}[} = \beta|_{]n_1,n_{k+1}[} = x^*|_{]n_1,n_{k+1}[}$. 
        \item Moreover, if also $|x_{n_{k+1}}^*| = 2$, then $\alpha|_{]n_1,+\infty[} = \beta|_{]n_1,+\infty[} = x^*|_{]n_1,+\infty[}$. 
    \end{enumerate}
\end{lemma}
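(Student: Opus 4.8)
The plan is to exploit the structure of a compatible functional, where consecutive coordinates on the support alternate in sign, together with the collection of unit-ball vectors from Lemma~\ref{lemma: bola}. Writing $x^* = \sum_{i=1}^{k+1} x_{n_i}^* e_{n_i}^*$, the hypothesis $|\supp x^*| > 2$ means $k \geq 2$, so each intermediate coordinate satisfies $|x_{n_i}^*| = 2$ for $1 < i < k+1$. The strategy is to show $\alpha_j = \beta_j = x_j^*$ coordinate by coordinate for $j \in\, ]n_1, n_{k+1}[$ by exploiting that the only way two norm-$\leq 1$ functionals can average to something with a coordinate of absolute value $2$ is if both attain that coordinate exactly.

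For part (1), I would first handle the interior support coordinates. Fix an index $n_i$ with $1 < i < k+1$, so $|x_{n_i}^*| = 2$, and hence $\alpha_{n_i} + \beta_{n_i} = 2x_{n_i}^* = \pm 4$. Since $B_{V_1} \subseteq [-1,1]^\N$ forces $|\alpha_{n_i}|, |\beta_{n_i}| \leq 1$ whenever we test against $e_{n_i}$ — but $e_{n_i}$ lies in $B_{V_1}$ only for $n_i \leq 2$, which fails here. The correct tool is evaluation against $\tfrac{\rchi_{[n_i, j]}}{2}$ or $\tfrac{e_{n_i} \pm e_j}{2}$-type vectors. Evaluating $\alpha$ and $\beta$ on $\tfrac{e_{i'} \pm e_{n_i}}{2}$ with a suitable small index, or more directly noting that $|\alpha_{n_i}| \leq 2$ and $|\beta_{n_i}| \leq 2$ can be obtained by pairing $n_i$ with a neighbor via a difference vector in $B_{V_1}$, the equation $\alpha_{n_i} + \beta_{n_i} = \pm 4$ with both summands bounded by $2$ in absolute value forces $\alpha_{n_i} = \beta_{n_i} = \pm 2 = x_{n_i}^*$. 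Once the coordinates on $S \cap\, ]n_1, n_{k+1}[$ are pinned down, I must also show the off-support coordinates $j \in\, ]n_1, n_{k+1}[ \setminus S$ vanish for both $\alpha$ and $\beta$; here I would use that such a $j$ is sandwiched between two support indices whose values are already fixed, and test against a vector like $\tfrac{\rchi_{[i',j']}}{2}$ (item (6) of Lemma~\ref{lemma: bola}) spanning across $j$, using that a nonzero contribution at $j$ would increase $\nu$ and violate the norm bound; alternatively evaluate a difference-type vector as in Example~\ref{exe: f-a-theta} and compare with the maximal achievable value $2x^*$ attains.

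For part (2), with the additional hypothesis $|x_{n_{k+1}}^*| = 2$, I extend the argument past $n_{k+1}$ to all of $]n_1, +\infty[$. Since $\alpha_{n_{k+1}} + \beta_{n_{k+1}} = \pm 4$, the same boundedness argument gives $\alpha_{n_{k+1}} = \beta_{n_{k+1}} = \pm 2 = x_{n_{k+1}}^*$. Then for any index $j > n_{k+1}$, I would show $\alpha_j = \beta_j = 0$: a nonzero coordinate at such a $j$ in, say, $\alpha$, combined with the already-fixed value $\pm 2$ at $n_{k+1}$, would let me build a permissible set $A$ containing $n_{k+1}$ and $j$ on which $\nu(x, A)$ (equivalently $\alpha(x)$ for an appropriate extremal $x \in B_{V_1}$) exceeds $1$, contradicting $\alpha \in B_{V_1^*}$. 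The main obstacle I anticipate is the bookkeeping for the off-support and tail coordinates: pinning the support values is a clean two-sided bound, but ruling out spurious mass between and beyond the support indices requires choosing the right test vector from Lemma~\ref{lemma: bola} in each case and checking that the alternating-sign compatibility structure of $x^*$ leaves no room — this is where the permissibility constraint $|A| \leq \min A + 1$ and the precise membership of vectors like $\rchi_{[1,j]}$ versus $\tfrac{\rchi_{[i,j]}}{2}$ in $B_{V_1}$ must be invoked carefully.
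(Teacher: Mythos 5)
Your overall strategy coincides with the paper's: use the half-unit vectors of Lemma~\ref{lemma: bola} to squeeze the support coordinates of value $\pm 2$, then kill the off-support coordinates by testing against $\tfrac{\rchi_{[i,j]}}{2}$. (For the first step, note that the cleanest tool is simply $\tfrac{e_{n_i}}{2}\in B_{V_1}$ for $n_i\geq 3$, which gives $|\alpha_{n_i}|,|\beta_{n_i}|\leq 2$ directly; your detour through difference vectors is unnecessary but not wrong.) However, there are two genuine gaps in your treatment of the off-support and tail coordinates.

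First, your claimed mechanism for ruling out a nonzero coordinate at an off-support index $j$ is incorrect: you assert that a nonzero $\alpha_j$ ``would increase $\nu$ and violate the norm bound,'' i.e.\ contradict $\alpha\in B_{V_1^*}$ on its own. It does not. Evaluating $\alpha$ on $\tfrac{\rchi_{[n_i,j]}}{2}$ (once the coordinates strictly between are known to vanish) yields only the one-sided bound $|{\pm 1}+\tfrac{\alpha_j}{2}|\leq 1$, which is compatible with $\alpha_j$ ranging over an interval of length $4$ on one side of $0$; indeed there exist norm-one compatible functionals with coordinate $\pm 2$ at $n_{k+1}$ and a nonzero coordinate beyond it (e.g.\ $x^*_{A,\vec\theta}$ for $A=\{n_1,\dots,n_{k+1},j\}$). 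The argument must instead observe that the \emph{same-sided} bounds hold simultaneously for $\alpha_j$ and $\beta_j$, and only then does $\alpha_j+\beta_j=2x^*_j=0$ force $\alpha_j=\beta_j=0$. Second, your claim that every off-support $j$ is ``sandwiched between two support indices whose values are already fixed'' fails on the gap $]n_1,n_2[$: the lemma says nothing about the coordinate at $n_1$ (where $x^*_{n_1}=\pm1$, not $\pm2$), and $\alpha_{n_1}$ is genuinely not determined at this stage. The paper handles this interval by anchoring the induction at $n_2$ (whose value \emph{is} pinned at $\pm 2$) and working downward with the vectors $\tfrac{\rchi_{[j,n_2]}}{2}$. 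Both gaps are repairable, but as written the proof of the off-support vanishing does not go through.
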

\begin{proof}
    We may assume without loss of generality that $x_{n_1}^* = 1$. Notice that $|\supp x^*|>2$ implies that $n_1\geq 2$. For $1<i \leq k+1$, since $\frac{e_{n_i}}{2} \in B_{V_1}$, the evaluation of $\alpha$ and $\beta$ yields
$|\alpha_{n_i}|, |\beta_{n_i}| \leq 2$. If $i < k+1$, since $\alpha_{n_i}+\beta_{n_i} = 2 x_{n_i}^* =\pm4$, we get that $\alpha_{n_i} = \beta_{n_i} = x_{n_i}^*$. 

Let $1< i < k +1$ and $n_i< j< n_{i+1}$ be such that for every $n_i < m < j$, $\alpha_m=\beta_m=0$. Since $\Vert \tfrac{\rchi_{[n_i,j]}}{2} \Vert \leq 1$, we can evaluate $\alpha$ and $\beta$ to get 
\[
\text{either}\quad \left| 1+\frac{\alpha_j}{2}\right|,
\left| 1+ \frac{\beta_j}{2}\right| \leq 1
\quad\text{ or}\quad
\left| 1- \frac{\alpha_j}{2}\right|, \left| 1- \frac{\beta_j}{2}\right| \leq 1,
\]
Both imply that $\alpha_j = \beta_j= 0$. If $n_1< j< n_{2}$, a similar inductive argument applies to show that $\alpha_j = \beta_j= 0$, by evaluating $\alpha$ and $\beta$ on $\tfrac{\rchi_{[j,n_2]}}{2} \in B_{V_1}$.

Finally, if $|x^*_{n_{k+1}}|=2$, we use vectors $\frac{e_{n_{k+1}}}{2}$ and $\tfrac{\rchi_{[n_{k+1},j]}}{2}$ to show that $\alpha_{n_{k+1}} = \beta_{n_{k+1}} = x_{n_{k+1}}^*$ and $\alpha_j=\beta_j=0$ for $j> n_{k+1}$, which concludes the proof. 
\end{proof}

For the remaining coordinates, it will be necessary to compute the norm of other vectors of $V_1$. To tackle the initial segment $[1,n_1[$ and the tail $]n_{k+1},+\infty[$, we need the following remark:

\begin{rem}\label{extensions}
Given a finitely supported vector $x\in V_1$, let $n= \min \supp x$. For $i \leq n -1$ and $t = x(n)$, we have that
$$\| t\chi_{[i,n-1]} + x\| \leq \|x\|.$$
Similarly, if $n= \max \supp x$, for $j \geq n+1$ and $t = x(n)$, we have that
$$\| x + t\chi_{[n+1,j]}\| \leq \|x\|.$$
\end{rem}

The points defined on Lemma \ref{lemma: vectorsyz v2} will be used to determine the coordinates $n_1$ and $n_{k+1}$. 

\begin{lemma}\label{lemma: vectorsyz v2}
Given $F = \{n_1,\ldots, n_{m}\}$ and $\vec{\theta} = (\theta_{n_i})_{i=1}^{m} \in \{-1,1\}^F$, consider the following vector:
$$y_{\vec{\theta}} \doteq \sum_{i=1}^{m} \theta_{n_i} e_{n_i}.$$
\begin{enumerate}
    \item If $m = n_1$, then $\Vert y_{\vec{\theta}}\Vert\leq 2m-1$.
    \item If $m = n_1 +1$, then $\Vert y_{\vec{\theta}}\Vert\leq 2(m-1)$.
    \item If $m \geq n_1$ and $\theta_{n_1} = \theta_{n_2}$, then $\Vert y_{\vec{\theta}} \Vert\leq 2(m-1)$.
\end{enumerate}
\end{lemma}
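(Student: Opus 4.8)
The plan is to bound $\nu(y_{\vec\theta},A)$ uniformly over all permissible sets $A$, since $\|y_{\vec\theta}\|=\sup_{A\in\per}\nu(y_{\vec\theta},A)$. Write $A=\{a_1<\cdots<a_{j+1}\}$ and put $v_l=y_{\vec\theta}(a_l)$, so each $v_l\in\{-1,0,1\}$, with $v_l\neq 0$ exactly when $a_l\in F$. Summing the elementary inequality $|v_l-v_{l+1}|\le |v_l|+|v_{l+1}|$ and telescoping gives
\[
\nu(y_{\vec\theta},A)=\sum_{l=1}^{j}|v_l-v_{l+1}|\le 2\sum_{l=1}^{j+1}|v_l|-|v_1|-|v_{j+1}|=2\,|A\cap F|-|v_1|-|v_{j+1}|.
\]
Since each triangle inequality has defect $2$ precisely when $v_l=v_{l+1}\neq 0$, this refines to the identity $\nu(y_{\vec\theta},A)=2\,|A\cap F|-|v_1|-|v_{j+1}|-2c(A)$, where $c(A)$ counts the consecutive pairs $a_l,a_{l+1}\in A\cap F$ with $\theta_{a_l}=\theta_{a_{l+1}}$. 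I will use this together with permissibility, which yields $j=|A|-1\le\min A$, and with the fact that $n_1\le m$ in all three statements.

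I first reduce every case to $a_1=n_1$. If $a_1>n_1$ then $n_1\notin A$, so $|A\cap F|\le m-1$ and the displayed bound already gives $\nu(y_{\vec\theta},A)\le 2(m-1)$. If $a_1<n_1$ then $a_1\notin F$ and $j\le a_1\le n_1-1$, whence $|A\cap F|\le j\le n_1-1\le m-1$ and again $\nu(y_{\vec\theta},A)\le 2(m-1)$. So from now on $a_1=n_1$, and $v_1=\theta_{n_1}\neq 0$ contributes a saving of $1$.

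For statement (1), where $m=n_1$, the bound gives $\nu(y_{\vec\theta},A)\le 2\,|A\cap F|-1\le 2m-1$ directly. For statement (2), where $m=n_1+1$, I split on the last entry: if $a_{j+1}\in F$ then both endpoints are nonzero and $\nu\le 2m-2$, while if $a_{j+1}\notin F$ then $|A\cap F|\le j\le n_1=m-1$, so $\nu\le 2(m-1)-1$; either way $\nu\le 2(m-1)$. For statement (3), if $|A\cap F|\le m-1$ the bound gives $\nu\le 2(m-1)-1$, so the only remaining subcase is $F\subseteq A$ with $a_1=n_1$. Here permissibility forces $m\le\min A+1=n_1+1$, so $A$ contains at most one point outside $F$. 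If $A=F$, or if the extra point does not fall strictly between $n_1$ and $n_2$, then $n_1$ and $n_2$ are consecutive in $A$, and the hypothesis $\theta_{n_1}=\theta_{n_2}$ makes them a same-sign pair, so $c(A)\ge 1$. Otherwise the unique extra point lies between $n_1$ and $n_2$, forcing $a_{j+1}=n_m\in F$ and hence $v_{j+1}\neq 0$. In both situations $|v_{j+1}|+2c(A)\ge 1$, and the refined identity yields $\nu(y_{\vec\theta},A)\le 2m-1-1=2(m-1)$.

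The crux, and the only place where more than the crude counting bound is needed, is this last subcase of (3): the inequality alone yields only $2m-1$, so one must exploit either the sign cancellation recorded by $c(A)$ or the non-vanishing of the right endpoint. The mechanism that makes one of these savings unavoidable is the scarcity of room inside a permissible set — once $F\subseteq A$ and $\min A=n_1$, at most one element of $A$ can lie beyond $F$ — which is exactly where the permissibility constraint $|A|\le\min A+1$ does the decisive work.
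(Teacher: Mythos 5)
Your proof is correct and rests on the same counting idea as the paper's: each element of $A\cap F$ contributes at most $2$ to $\nu(y_{\vec\theta},A)$, with a saving of $1$ for each endpoint of $A$ lying in $F$ and of $2$ for each same-sign consecutive pair, while permissibility ($|A|\le \min A+1$) limits $|A\cap F|$ in the critical subcases. Your explicit identity $\nu(y_{\vec\theta},A)=2\,|A\cap F|-|v_1|-|v_{j+1}|-2c(A)$ is a tidier, fully rigorous rendering of the paper's informal jump-counting, but the route is essentially the same.
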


\begin{proof}
    For fixed $F$ and $\vec{\theta}$, notice that the values of the coordinates of $y_{\vec{\theta}}$ have at most $m-1$ jumps of height $2$, between two coordinates in the support, and two jumps of height $1$, associated to the first and last nonzero coordinates. 
    
    If $m=n_1$, then a permissible set $A$ containing $F$ may contain only one further element, so that $\nu(y_{\vec{\theta}},A) \leq 2(m-1)+1 = 2m-1$, while a permissible set which does not contain $F$ may count the two jumps of height $1$, but necessarily misses at least one jump of height $2$, so that $\nu(y_{\vec{\theta}},A) \leq 2(m-2)+2 = 2(m-1)$. Hence, $\Vert y_{\vec{\theta}}\Vert \leq 2m-1$.
        
    If $m=n_1+1$, then a permissible set $A$ containing $F$ equals $F$, since $F$ is maximal. Then it misses the first and last jumps and this yields $\nu(y_{\vec{\theta}},A)\leq 2(m-1)$. A permissible set missing at least one element of $F$ may count the two jumps of height $1$, but necessarily misses at least one jump of height $2$, so that $\nu(y_{\vec{\theta}},A) \leq 2(m-2)+2 = 2(m-1)$. Therefore, $\Vert y_{\vec{\theta}}\Vert \leq 2(m-1)$.
    
    Finally, if $m\geq n_1$ and $\theta_{n_1} = \theta_{n_2}$, the coordinates of $y_{\vec{\theta}}$ 
    have at most $m-2$ jumps of height $2$ - from $\theta_{n_i}$ to $\theta_{n_{i+1}}$ for $2 \leq i \leq m$ - and $4$ jumps of height $1$: from $0$ to $\theta_{n_1}$, then back to $0$ and back to $\theta_{n_2}=\theta_{n_1}$, and finally a last jump from $\theta_{n_{m}}$ to $0$.    

    Thus, a permissible set $A$ containing $n_1$ hast at most $n_1+1\leq m+1$ elements. Hence, either $A$ contains $F$ and one more element, which makes it consider all steps of height $2$ and at most $2$ more jumps, which ensures $\nu(y_{\vec{\theta}},A) \leq 2(m-2)+2= 2(m-1)$; otherwise it misses at least one jump of height $2$ and may consider all $4$ jumps of height $1$, which yields $\nu(y_{\vec{\theta}},A) \leq 2(m-3)+4= 2(m-1)$. Finally, $A$ does not contain $n_1$, then it necessarily does not count two jumps of height $1$, so that $\nu(y_{\vec{\theta}},A) \leq 2(m-2) +2 = 2(m-1)$. This concludes the proof. 
\end{proof}

\section{Extreme points of \texorpdfstring{$B_{{V_1}^*}$}{BV1*}}\label{sec extreme points v1*}

In this section we will determine what are the compatible functionals $x^* = \sum\limits_{i=1}^{k+1} x_{n_i}^*e_{n_i}^* \in M$ that lie on $\ext\big(B_{V_1^*}\big)$. To ease the reading, the main theorem of this section, Theorem~\ref{teo: extreme points in BV1*}, is split on smaller propositions that will deal with different types of compatible functionals, highlighting where the points defined on the previous section are used.

The structure of the arguments to show that a given $x^*\in M$ is an extreme point of $B_{V_1^*}$ will follow the pattern: we suppose that $\alpha,\beta\in B_{V_1^*}$ are such that $\alpha+ \beta = 2 x^*$ and argue that $\alpha=\beta$. Notice that $x^*$ is an extreme point if, and only if, $-x^*$ is an extreme point. Starting from a compatible functional $x^*$ which we want to decide if it is an extreme point of $B_{{V_1}^*}$ or not, this allows us to suppose, without loss of generality, that the first coordinate on the support of $x^*$ is $x^*_{n_1}=1$.

\begin{prop} \label{Prop: extV1* singleton support}
$\pm e_i^* \in \ext\big(B_{V_1^*}\big)$ $\iff$ $i\in\{1,2\}$.    
\end{prop}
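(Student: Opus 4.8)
The plan is to prove both implications, and since $x^*$ is an extreme point of $B_{V_1^*}$ if and only if $-x^*$ is (as noted just before the statement), it suffices to work with $+e_i^*$ throughout. Recall that each $e_i^*$ is a compatible functional (singleton support, $k=0$), so it is a legitimate candidate, and the task is to decide for which $i$ it actually lies in $\ext\big(B_{V_1^*}\big)$.

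For the implication $i\in\{1,2\}\Rightarrow e_i^*\in\ext\big(B_{V_1^*}\big)$, I would assume $\alpha,\beta\in B_{V_1^*}$ with $\alpha+\beta=2e_i^*$ and show $\alpha=\beta=e_i^*$. First, since $\alpha_i+\beta_i=2$ and $i\le 2$, Lemma~\ref{lemma: coordinates}(1) gives $\alpha_i=\beta_i=1$. The key device is then the family of ``one-sided'' vectors $\rchi_{[1,j]}\in B_{V_1}$ (Lemma~\ref{lemma: bola}(5)) for $j\ge 2$: because $i\le 2\le j$ we have $e_i^*(\rchi_{[1,j]})=1$, while $\alpha(\rchi_{[1,j]}),\beta(\rchi_{[1,j]})\le 1$; since these two numbers average to $1$, both equal $1$, that is, $\sum_{m=1}^{j}\alpha_m=1$ for every $j\ge 2$. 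Telescoping these identities yields $\alpha_j=0$ for all $j\ge 3$, while the case $j=2$ gives $\alpha_1+\alpha_2=1$, which together with $\alpha_i=1$ forces the remaining coordinate among $\{1,2\}$ to vanish. Hence $\alpha=e_i^*=\beta$.

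For the converse I would prove the contrapositive: if $i\ge 3$, then $e_i^*$ is not extreme, by exhibiting an explicit nontrivial decomposition. Set $\alpha=e_{i-1}^*$ and $\beta=-e_{i-1}^*+2e_i^*$. Both are compatible functionals, hence lie in $B_{V_1^*}$: the first is a singleton, and for the second one checks that $F=\{i-1,i\}$ has $|F|=2<i=\min F+1$, so $F\notin\permax$ and the value $2$ on the last coordinate is admissible. These two functionals are distinct and satisfy $\tfrac12(\alpha+\beta)=e_i^*$, so $e_i^*\notin\ext\big(B_{V_1^*}\big)$; by symmetry neither is $-e_i^*$.

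The substance of the argument is entirely in spotting the right test objects. The rigidity for $i\le 2$ comes from the fact that $\rchi_{[1,j]}$ is anchored at the first coordinate, so the functional is pinned from one side only and the averaging forces equality; for $i\ge 3$ this anchoring is lost, and the admissibility of the coefficient $2$ on an interior coordinate of a non-maximal permissible pair is precisely what opens up room for a genuine decomposition. I expect the only point requiring care is the bookkeeping that $-e_{i-1}^*+2e_i^*$ is indeed compatible (equivalently, that it has norm $\le 1$), which reduces to the permissibility check above together with the fact that $M\subseteq B_{V_1^*}$.
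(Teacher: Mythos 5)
Your proof is correct and follows essentially the same route as the paper's: for $i\le 2$ you use Lemma~\ref{lemma: coordinates}(1) and the test vectors $\rchi_{[1,j]}$ exactly as the paper does (your telescoping replaces its induction), and for $i\ge 3$ your decomposition $e_i^*=\tfrac12\bigl(e_{i-1}^*+(-e_{i-1}^*+2e_i^*)\bigr)$ is the $\delta=1$ endpoint of the paper's family $\alpha=\delta e_{i-1}^*+(1-\delta)e_i^*$, $\beta=-\delta e_{i-1}^*+(1+\delta)e_i^*$. The only point to make explicit is that $\|-e_{i-1}^*+2e_i^*\|\le 1$, which (rather than an appeal to ``compatible $\Rightarrow$ in $B_{V_1^*}$'') is checked by the same computation the paper uses, namely $|2x_i-x_{i-1}|\le|x_i-x_{i-1}|+|x_i|=\lim_{n}\nu(x,\{i-1,i,n\})\le 1$ using that $\{i-1,i,n\}\in\per$ when $i\ge 3$.
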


\begin{proof}
Without loss of generality we consider only $e_i^*$.

$(\Rightarrow)$ For $i\geq 3$, fix $0<\delta<1$ and consider:
    \[
    \alpha \doteq \delta e_{i-1}^* + (1-\delta)e_{i}^* 
    \qquad \text{and}\qquad
    \beta \doteq -\delta e_{i-1}^* + (1+\delta)e_{i}^*.
    \]
Because $\alpha+\beta =2e_i^*$ with $\alpha\neq \beta$, to prove that $e_i^*$ is not extreme, we need to show that $\alpha,\beta\in B_{V_1^*}$. Indeed, if $x\in B_{V_1}$, then:
\[
|\alpha(x)| = |\delta x_{i-1} + (1-\delta)x_{i}|\leq \delta \|x\| + (1-\delta)\|x\| = \|x\| \leq 1
\]
and
\[
|\beta(x)|= |-\delta x_{i-1} + (1+\delta)x_{i}|
        <  |x_i - x_{i-1}| + |x_i|\]
\[        
        = \lim_{n\to\infty} |x_i - x_{i-1}| + |x_i - x_n|  = \lim_{n\to\infty} \underbrace{\nu(x, \{i-1,i,n\})}_{\leq 1} \leq 1.
\]
Thus, $e_i^* \notin \ext\big(B_{V_1^*}\big)$ for $i\geq 3$. 

$(\Leftarrow)$ Conversely, for $i=1$, let $\alpha, \beta\in B_{V_1^*}$ be such that $\alpha+\beta = 2e_1^*$. From (1) of Lemma \ref{lemma: coordinates} we conclude that $\alpha_1 = \beta_1 = 1$. For each $j\in \N$, we evaluate $\alpha$ and $\beta$ on $\chi_{[1,j]}\in B_{V_1}$ to obtain, by induction on $j\geq 2$:
\[
|1 + \alpha_j| = \left|\alpha(\chi_{[1,j]}) \right| \leq 1
\qquad \text{and}\qquad
 |1 + \beta_j| = \left|\beta(\chi_{[1,j]}) \right|\leq 1.
\]
It follows that $\alpha_j, \beta_j \leq 0$ and since $\alpha_j+\beta_j=0$, we conclude that $\alpha_j = \beta_j=0$. Hence, $\alpha = \beta = e_1^*$ and $e_1^* \in \ext\big(B_{V_1^*}\big)$. The proof for $i=2$ follows analogously.
\end{proof}

As we mentioned in the previous section, we will use the points defined on Lemma~\ref{lemma: bola}, Remark~\ref{extensions} and Lemma~\ref{lemma: vectorsyz v2} to determine whether or not $x^*$ is an extreme point when its last coordinate is such that $|x_{n_{k+1}}| = 1$. We present:

\begin{prop} \label{Prop: extV1* |x_n{k+1}|=1}
Let $x^* = \sum\limits_{i=1}^{k+1} x^*_{n_i}e_{n_i}^*\in M$ be such that $k \geq 1$ and  $|x_{n_{k+1}}^*|=1$. Then:
\[x^*\in \ext(B_{V_1^*}) \iff \supp x^* \in \permax.\]
\end{prop}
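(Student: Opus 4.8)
The plan is to prove both implications, the maximal case being the substantial one; throughout I assume $x^*_{n_1}=1$ (legitimate by the reduction preceding Proposition~\ref{Prop: extV1* singleton support}) and I write $F=\supp x^*=\{n_1<\cdots<n_{k+1}\}$.

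For the converse ($F\notin\permax\Rightarrow x^*\notin\ext(B_{V_1^*})$) I would argue directly. Failure of maximality means $k<n_1$, so $k+2\leq n_1+1=\min F+1$; hence for any $m>n_{k+1}$ the set $A=F\cup\{m\}$ is still permissible. Since $|x^*_{n_{k+1}}|=1$, Example~\ref{exe: f-a-theta} lets me write $x^*=x^*_{F,\vec\theta}=\sum_{i=1}^k\theta_i(e_{n_i}^*-e_{n_{i+1}}^*)$, and the two sign-extensions $x^*_{A,(\vec\theta,\pm1)}=x^*\pm(e_{n_{k+1}}^*-e_m^*)$ are again compatible functionals, hence lie in $B_{V_1^*}$. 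They are distinct and average to $x^*$, so $x^*$ is not extreme.

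For the forward direction I would take $\alpha,\beta\in B_{V_1^*}$ with $\alpha+\beta=2x^*$ and pin down every coordinate. Because $F\in\permax$ we have $|F|=n_1+1$, which places $y_{\vec\theta}$ in case (2) of Lemma~\ref{lemma: vectorsyz v2}: for $\theta_{n_i}=\sgn(x^*_{n_i})$ we get $\|y_{\vec\theta}\|\leq 2k$, while $x^*(y_{\vec\theta})=\sum_i|x^*_{n_i}|=2k$, so in fact $\|y_{\vec\theta}\|=2k$. Evaluating $\alpha+\beta=2x^*$ on $y_{\vec\theta}$ and using $|\alpha(y_{\vec\theta})|,|\beta(y_{\vec\theta})|\leq 2k$ then forces $\alpha(y_{\vec\theta})=\beta(y_{\vec\theta})=2k$. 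This is the crucial relation, and it is exactly where maximality enters: if $F$ were not maximal the available bound in Lemma~\ref{lemma: vectorsyz v2}(1) would be $2k+1>2k$, so the forcing fails, in agreement with the converse.

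It remains to propagate this. For $|\supp x^*|>2$, Lemma~\ref{lemma: functionals coincide on ]n1,nk+1[}(1) gives $\alpha=\beta=x^*$ on $]n_1,n_{k+1}[$, which combined with the relation above yields $\alpha_{n_1}+\sgn(x^*_{n_{k+1}})\,\alpha_{n_{k+1}}=2$ and likewise for $\beta$. To kill the initial segment I use the flat extensions of Remark~\ref{extensions}: for $p<n_1$ the vector $\chi_{[p,n_1-1]}+y_{\vec\theta}$ has norm $\leq\|y_{\vec\theta}\|=2k$, so $\alpha$ gives $\sum_{l=p}^{n_1-1}\alpha_l\leq 0$ and $\beta$ the reverse inequality; telescoping over $p$ forces $\alpha_l=\beta_l=0$ for all $l<n_1$. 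Symmetrically, the right extensions $y_{\vec\theta}+\sgn(x^*_{n_{k+1}})\chi_{[n_{k+1}+1,q]}$ force $\alpha_l=\beta_l=0$ for $l>n_{k+1}$. With the initial segment now null, Lemma~\ref{lemma: coordinates}(1)--(2) gives $\alpha_{n_1}=\beta_{n_1}=1$, and the relation then delivers $\alpha_{n_{k+1}}=\beta_{n_{k+1}}=x^*_{n_{k+1}}$, so $\alpha=\beta=x^*$. The degenerate case $|\supp x^*|=2$ (which forces $n_1=1$) I would treat separately: after pinning $\alpha_1=\beta_1=1$ by Lemma~\ref{lemma: coordinates}(1), the gaps in $]1,n_2[$ are zeroed by the same $\chi_{[1,j]}$-induction used in Proposition~\ref{Prop: extV1* singleton support}.

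I expect the main obstacle to be conceptual rather than computational: recognizing that maximality of $F$ is precisely what makes the norm bound of Lemma~\ref{lemma: vectorsyz v2}(2) \emph{tight}, so that $x^*$ attains $2k$ on $y_{\vec\theta}$ and the averaging inequality becomes an equality. Identifying the flat-extension vectors of Remark~\ref{extensions} as the right instruments to annihilate the initial segment and the tail, and ordering the steps so that no coordinate is pinned before its prerequisites are in place, is the remaining delicate point; once these are set, everything else is bookkeeping.
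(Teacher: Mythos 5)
Your proof is correct and both directions follow the paper's blueprint: the non-maximal case is disposed of by the identical perturbation $x^* \pm (e_{n_{k+1}}^* - e_m^*)$, and the maximal case is settled coordinate by coordinate using Lemma~\ref{lemma: functionals coincide on ]n1,nk+1[} for the interior, Remark~\ref{extensions} for the initial segment and the tail, and Lemma~\ref{lemma: vectorsyz v2} for the endpoints. Where you genuinely diverge is in how the two endpoint coordinates $n_1$ and $n_{k+1}$ are pinned. The paper treats them independently: it evaluates $\alpha,\beta$ on a pair $y,\bar{y}$ supported on $\{n_1,\dots,n_k\}$ only (case (1) of Lemma~\ref{lemma: vectorsyz v2}, bound $2k-1$) to get $|\alpha_{n_1}|\leq 1$ outright, and on a second pair $z,\bar{z}$ supported on all of $\supp x^*$ (case (2), bound $2k$) to get $|\alpha_{n_{k+1}}|\leq 1$. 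You instead observe that $x^*$ attains the value $2k=\Vert y_{\vec\theta}\Vert$ on the single full-support vector $y_{\vec\theta}$, which forces $\alpha(y_{\vec\theta})=\beta(y_{\vec\theta})=2k$ and hence only the linear relation $\alpha_{n_1}+\sgn(x^*_{n_{k+1}})\alpha_{n_{k+1}}=2$; you must then zero the initial segment first so that Lemma~\ref{lemma: coordinates} can pin $\alpha_{n_1}=1$, after which $\alpha_{n_{k+1}}$ is read off the relation. This buys economy (one vector instead of four) and makes the role of maximality transparent --- it is exactly what makes the bound in case (2) tight --- at the cost of a stricter ordering of the steps, which you do respect. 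Two small points of presentation, neither affecting correctness: evaluating $\beta$ on $\chi_{[p,n_1-1]}+y_{\vec\theta}$ gives the \emph{same} inequality $\sum_{l=p}^{n_1-1}\beta_l\leq 0$, and the two sums vanish because $\alpha_l+\beta_l=0$ (your phrase ``the reverse inequality'' is only correct if read as the reverse bound on the $\alpha$-sum); and in the two-point case you should state explicitly that $\alpha_{n_2}$ and the tail are still handled by the crucial relation and the right extensions, since Lemma~\ref{lemma: functionals coincide on ]n1,nk+1[} is unavailable there and your sentence only addresses the gap $]1,n_2[$.
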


\begin{proof}
$(\Rightarrow)$
Suppose that $\supp x^*\notin \permax$. Let $l>n_{k+1}$ and define:
\[
   \alpha \doteq x^* + (e_{n_{k+1}}^* - e_l^*) 
   \qquad \text{and}\qquad 
   \beta \doteq  x^* - (e_{n_{k+1}}^* - e_l^*).
\]
Since $\{n_1, \dots, n_{k+1}, l \} \in \per$, it follows from Example~\ref{exe: f-a-theta} that $\alpha$ and $\beta$ are compatible funcionals. In particular, $\alpha,\beta\in B_{V_1^*}$.  Since $\alpha + \beta = 2x^*$, we get that $x^* \notin \ext\big(B_{V_1^*}\big)$. 

$(\Leftarrow)$ Suppose that $\supp x^*\in\permax$, let $\alpha,\beta\in B_{V_1^*}$ such that $\alpha+\beta = 2x^*$ and assume, without loss of generality, that $x_{n_1}^* = 1$. To prove that $x^*$ is an extreme point, we will divide the argument in the following two cases:

\underline{\textbf{Case 1:}} $|\supp x^*| = 2$.

In this case, $x^* = e_1^* - e_j^*$ for some $j > 1$. Then it follows from (1) of Lemma \ref{lemma: coordinates} that $\alpha_1 = \beta_1 =1$. Also, since $\dfrac{e_1\pm e_j}{2}\in B_{V_1}$, we have:
\[
\left|\dfrac{1}{2} + \dfrac{\alpha_j}{2}\right|  = \left|\alpha\left( \dfrac{e_1+e_j}{2}\right)\right| \leq 1 
\quad \text{ and }\quad
\left|\dfrac{1}{2} - \dfrac{\alpha_j}{2}\right| = \left|\alpha\left( \dfrac{e_1-e_j}{2}\right)\right|  \leq 1
\]
and we conclude that $|\alpha_j|\leq 1$. Similarly, $|\beta_j|\leq 1$. Since $\alpha_j+\beta_j=2x^*_j = -2$, we get that $\alpha_j = \beta_j = -1 = x_j^*$. 
Finally, to show that $\alpha=\beta$, let $i>1$, $i \neq j$. By evaluating $\alpha$ and $\beta$ on $e_1 + \dfrac{e_i}{2} \in B_{V_1}$, we conclude that $\alpha_i, \beta_i \leq 0$. Since $\alpha_i+\beta_i=0$, we get that $\alpha_i=\beta_i=0$. Thus $\alpha=\beta=x^*$ and $x^*$ is extreme.

\underline{\textbf{Case 2:}} $|\supp x^*| > 2$.

Write $x^* = \sum\limits_{i=1}^{k+1} x_{n_i}^* e_{n_i}^*$ and observe that $\alpha|_{]n_1,n_{k+1}[}=\beta|_{]n_1,n_{k+1}[}=x^*|_{]n_1,n_{k+1}[}$, by Lemma~\ref{lemma: functionals coincide on ]n1,nk+1[}.

\textbf{Claim 1:} $\alpha_{n_1} =\beta_{n_1} = x_{n_1} =1$.

\begin{proof}[Proof of Claim 1.]
Consider the vectors 
$$y = e_{n_1} + \sum_{i=2}^{k} (-1)^{i-1} e_{n_i} \quad \text{and} \quad \bar{y} = e_{n_1}  - \sum_{i=2}^{k} (-1)^{i-1} e_{n_i}.$$
Since $\supp x^* = \{n_1, \ldots,n_{k+1}\}\in \permax$, we have that $k=n_1$. Hence, $m \doteq |\supp y|=|\supp \bar{y}| = k = n_1$. It follows from (1) of Lemma \ref{lemma: vectorsyz v2} that $y$ and $\bar{y}$ have norm at most $2k-1$.

The evaluation of $\alpha$ on them yields:
\begin{equation*} 
\alpha_{n_1}+2(k-1) = (2k-1)-(1-\alpha_{n_1}),
\end{equation*}
and
\begin{equation*}
\alpha_{n_1}-2(k-1) = -(2k-1)+(1+\alpha_{n_1}) 
\end{equation*}
respectively. Using that $\alpha \in B_{V_1}$, we get that $1+\alpha_{n_1} \geq 0$ and $1-\alpha_{n_1} \geq 0$, which implies that $|\alpha_{n_1}|\leq 1$. Similarly, $|\beta_{n_1}|\leq 1$. Since $\alpha_{n_1} + \beta_{n_1} = 2x^*_{n_1}=2$, we conclude that $\alpha_{n_1} = \beta_{n_1} = 1 = x^*_{n_1}$.
\end{proof}

\textbf{Claim 2:} $\alpha|_{[1,n_1[}=\beta|_{[1,n_1[} = x^*|_{[1,n_1[} \equiv 0$.

 \begin{proof}[Proof of Claim 2.] 
Given $1 \leq j < n_1$, suppose that for all $j < m < n_1$ we have that $\alpha_m=\beta_m =0$. The evaluation of $\alpha$ and $\beta$ on $\chi_{[j,n_1-1]} + y$, by Remark \ref{extensions} and Lemma \ref{lemma: vectorsyz v2} gives us that:
\[
   \alpha_j + 2k-1\leq \Vert \alpha\Vert \Vert \chi_{[j,n_1-1]} + y \Vert \leq \Vert y \Vert \leq 2k-1
\]
and 
\[
   \beta_j +2k-1 \leq \Vert \beta\Vert \Vert \chi_{[j,n_1-1]} + y\Vert \leq \Vert y \Vert \leq 2k-1, 
\]
so that $\alpha_j, \beta_j \leq 0$. Since $\alpha_j + \beta_j = 2x^*_j=0$, this implies that $\alpha_j = \beta_j = 0$.
\end{proof}

\textbf{Claim 3:} $\alpha_{n_{k+1}} =\beta_{n_{k+1}} = x_{n_{k+1}} =1$.

\begin{proof}[Proof of Claim 3.] For the coordinates above $n_{k+1}$, let us consider the vectors 
$$z = \sum_{i=1}^{k} (-1)^{i-1} e_{n_i} + e_{n_{k+1}} \quad \text{ and } \quad \bar{z} =  \sum_{i=1}^{k} (-1)^{i-1} e_{n_i} - e_{n_{k+1}}$$
Since $\supp x^* = \{n_1, \ldots,n_{k+1}\}\in \permax$, we have that $k=n_1$. It follows from (2) of Lemma \ref{lemma: vectorsyz v2} that $z$ and $\bar{z}$ have norm at most $2k$.

The evaluation of $\alpha$ on these vectors yields:
$$
    2k-1 + \alpha_{n_{k+1}} =2k- (1-\alpha_{n_{k+1}}) \,\, \text{ and } \,\, 2k -1 -\alpha_{n_{k+1}} = 2k -(1+ \alpha_{n_{k+1}}),
$$
respectively. Since $\alpha$ has norm $1$, it follows that $|\alpha_{n_{k+1}}|\leq 1$. Similarly, $|\beta_{n_{k+1}}|\leq 1$. Thus, $\alpha_{n_{k+1}} = \beta_{n_{k+1}} = x^*_{n_{k+1}}$.
\end{proof}

\textbf{\underline{Claim 4:}} $\alpha|_{]n_{k+1},+\infty[}=\beta|_{]n_{k+1},+\infty[} = x^*|_{]n_{k+1},+\infty[} \equiv 0$.

\begin{proof}[Proof of Claim 4.] Let $j>n_{k+1}$ be such that $\alpha_m=\beta_m=0$ for every $n_{k+1} < m < j$. Suppose $x^*_{n_{k+1}}=1$. Notice that, by Remark \ref{extensions} and (2) of Lemma \ref{lemma: vectorsyz v2}, 
$$\left \Vert z + \chi_{[n_{k+1}+1,j]}
\right\Vert=
\left \Vert z \right\Vert\leq 2k.$$
Also, 
$$\alpha \left(z + \chi_{[n_{k+1}+1,j]}\right)
   = (1+ 2(k-1) + 1) + \alpha_j 
    = 2k + \alpha_j.$$
This determines the sign of $\alpha_j$.
Similarly, the same inequality holds for $\beta_j$, so that $\sgn(\alpha_j)=\sgn(\beta_j)$ and, since $\alpha_j+\beta_j = 2x^*_j =0$, we get that $\alpha_j=\beta_j=0$. In case $x^*_{n_{k+1}}=-1$, we replace $z + \chi_{[n_{k+1}+1,j]}$ by $\bar{z} - \chi_{[n_{k+1}+1,j]}$ and the same argument guarantees that $\alpha_j=\beta_j=0$. \end{proof}
Thus, $x^* = \alpha=\beta$ and $x^*\in\ext\big(B_{V_1^*}\big)$.
\end{proof}

The next proposition deals with the case where $x^*$ is a compatible functional and its last non-zero coordinate is $\pm2$. We will use different techniques to decide whether or not $x^*$ is an extreme point, depending on the cardinality of $\supp x^*$.

\begin{prop}
Let $x^* = \sum\limits_{i=1}^{k+1} x_{n_i}^* e_{n_i}^* \in M$ be such that $|x_{n_{k+1}}^*| =2$. Then:
$$x^* \in \ext\big( B_{V_1^*}\big) \iff n_1-1\leq |\supp x^*| \leq n_1.$$
\end{prop}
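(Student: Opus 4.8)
The plan is to first dispatch the upper bound, which is automatic, and then prove the two implications. Being compatible with $|x_{n_{k+1}}^*|=2$ forces $F\doteq\supp x^*\notin\permax$, since the definition of a compatible functional demands $\lambda_{k+1}=1$ whenever $F\in\permax$. Thus $F$ is permissible but non-maximal, and writing $|F|=k+1$, $\min F=n_1$ this yields $k+1\le n_1$ at once; so $|\supp x^*|\le n_1$ needs no proof and the task reduces to showing that $x^*$ is extreme if and only if $|\supp x^*|\ge n_1-1$. As in the previous propositions I take $x_{n_1}^*=1$.

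For ``$|\supp x^*|<n_1-1\Rightarrow x^*\notin\ext(B_{V_1^*})$'' I produce an explicit splitting. Put
\[
\alpha\doteq x^*+(e_{n_1}^*-e_{n_1-1}^*)\qquad\text{and}\qquad\beta\doteq x^*-(e_{n_1}^*-e_{n_1-1}^*),
\]
so $\alpha+\beta=2x^*$ and $\alpha\ne\beta$; everything rests on both being compatible, hence in $B_{V_1^*}$. The functional $\alpha$ places $-1$ at $n_1-1$ and lifts the $n_1$-coordinate to $+2$, so $\supp\alpha=\{n_1-1\}\cup F$ has alternating signs, a leading entry of modulus $1$, and (because $k+1\le n_1-2$) exactly $k+2\le n_1-1$ elements with minimum $n_1-1$; this makes it permissible and \emph{non-maximal}, which is what permits the final coordinate to retain modulus $2$. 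Likewise $\beta$ erases $n_1$ and promotes $n_1-1$ to a leading $+1$, with $\supp\beta=\{n_1-1\}\cup(F\setminus\{n_1\})$ again permissible and non-maximal. Matching these two memberships against the definition of compatible is the whole proof of this direction.

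For the converse let $\alpha,\beta\in B_{V_1^*}$ satisfy $\alpha+\beta=2x^*$, where now $|\supp x^*|\in\{n_1-1,n_1\}$. If $|\supp x^*|>2$, Lemma~\ref{lemma: functionals coincide on ]n1,nk+1[}(2) gives $\alpha=\beta=x^*$ on $]n_1,+\infty[$, so only $[1,n_1]$ is left, and I follow Claims~1--2 of Proposition~\ref{Prop: extV1* |x_n{k+1}|=1}. Evaluating on $y^{\pm}\doteq\pm e_{n_1}+\sum_{i=2}^{k+1}\sgn(x_{n_i}^*)\,e_{n_i}$ (supported on $F$) and using that $\alpha,\beta$ already equal $x^*$ above $n_1$, the sums collapse to $\alpha(y^{\pm})=\pm\alpha_{n_1}+2k$; the bound $\|y^{\pm}\|\le 2k+1$ then gives $|\alpha_{n_1}|,|\beta_{n_1}|\le 1$, so $\alpha_{n_1}=\beta_{n_1}=1$. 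Prepending constant runs and inducting downward on $j<n_1$ with the vectors $\rchi_{[j,n_1-1]}+y^{+}$ (norm $\le\|y^{+}\|$ by Remark~\ref{extensions}) gives $\alpha_j=\beta_j=0$, hence $\alpha=\beta=x^*$. The residual case $|\supp x^*|=2$ (which forces $n_1\in\{2,3\}$ and $x^*=e_{n_1}^*-2e_{n_2}^*$) is handled by hand from the ball vectors of Lemma~\ref{lemma: bola}: e.g. $\tfrac{e_{n_2}}{2}$ pins $\alpha_{n_2}=\beta_{n_2}=-2$, the same $y^{\pm}$ pin $\alpha_{n_1}=1$, and halved intervals $\tfrac{\rchi_{[\,\cdot\,]}}{2}$ clear the other coordinates.

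The crux is the estimate $\|y^{\pm}\|\le 2k+1$ in the subcase $|\supp x^*|=n_1-1$. When $|\supp x^*|=n_1$ this is exactly Lemma~\ref{lemma: vectorsyz v2}(1), but when $|F|=n_1-1$ the support is one index too short, so the bound must be reproved by the permissible-budget count behind that lemma. For the strictly alternating choice, $y^{\pm}$ has $k$ interior jumps of height $2$ and two boundary jumps of height $1$; counting \emph{both} unit jumps in some $\nu(y^{\pm},A)$ needs a permissible $A$ with an index below $n_1$ and one above $n_{k+1}$, i.e.\ at least $k+3$ elements of minimum $\le n_1-1$, whereas any permissible set of minimum $\le n_1-1$ has at most $n_1=k+2$ elements. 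So at most one boundary jump is ever seen and $\|y^{\pm}\|\le 2k+1$ (the other sign choice repeats its leading sign and has total variation only $2k$). It is precisely this \emph{odd} bound that forces $|\alpha_{n_1}|\le 1$: for $|\supp x^*|\le n_1-2$ one only gets $2k+2$, matching the failure of extremality there.
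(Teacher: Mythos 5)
Your argument is correct in substance and shares the paper's overall architecture (an explicit splitting for non-extremality, then coordinate-by-coordinate identification of $\alpha$ and $\beta$ against carefully chosen unit-ball vectors), but it deviates in two places. First, for $|\supp x^*|<n_1-1$ you split $x^*$ by the integral perturbation $\pm(e_{n_1}^*-e_{n_1-1}^*)$ rather than the paper's $\delta$-perturbation $\pm\delta(e_{n_1-1}^*-e_{n_1}^*)$; your $\alpha$ and $\beta$ do have the compatible coordinate pattern on the permissible non-maximal sets $\{n_1-1\}\cup F$ and $\{n_1-1\}\cup(F\setminus\{n_1\})$, but note that the paper defines ``compatible'' only for functionals already known to lie in $B_{V_1^*}$, so ``compatible, hence in $B_{V_1^*}$'' is circular as stated: you still owe the norm estimate, e.g.\ $|\alpha(y)|\le |y_{n_1-1}-y_{n_1}|+\sum_i|y_{n_i}-y_{n_{i+1}}|+|y_{n_{k+1}}|=\lim_j\nu(y,\{n_1-1\}\cup F\cup\{j\})\le 1$, which is exactly where the hypothesis $|F|\le n_1-2$ enters (it makes $\{n_1-1\}\cup F\cup\{j\}$ permissible). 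Second, in the subcase $|\supp x^*|=n_1-1$ the paper first kills the coordinates in $[3,n_1[$ using the vectors $w_l$ (whose first two signs agree, so that item (3) of Lemma~\ref{lemma: vectorsyz v2} applies) and only then recovers $\alpha_{n_1}$ via Lemma~\ref{lemma: coordinates}; you instead extend the estimate of Lemma~\ref{lemma: vectorsyz v2}(1) to supports of size $\min F-1$ and reuse the vectors $y^{\pm}$ directly. That extension is true and is the real content of your proof, but your justification is imprecise: a permissible set \emph{can} see both boundary jumps (e.g.\ $\{n_1-1,n_1,j\}$ with $j>n_{k+1}$), so ``at most one boundary jump is ever seen'' is false as stated; the correct count is that no permissible set of minimum $\le n_1-1$ can contain $F$ together with one index below $n_1$ and one above $n_{k+1}$, and any set missing a point of $F$ forfeits at least one height-$2$ jump, whence $\nu(y^{\pm},A)\le 2k+1$ in all cases. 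Finally, in the residual case $|\supp x^*|=2$ the halved intervals do not reach the coordinates $j\le 2$ (where $\rchi_{[j,\cdot]}/2$ need not be in the ball or does not pin the sign); there you need item (3) of Lemma~\ref{lemma: coordinates}, i.e.\ the vectors $\tfrac{e_i\pm e_{n_2}}{2}$. With these three points tightened, the proof is complete.
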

\begin{proof}
Notice that $|x_{n_{k+1}}^*| =2$ implies $k \geq 1$. For both implications we can assume, without loss of generality, that $x_{n_1}^* = 1$. 

$(\Rightarrow)$  Suppose that $|\supp x^*| <n_1-1$ and notice that $A\doteq\{n_1-1, n_1, n_2, \ldots, n_{k+1}\}\in \per\setminus\permax$. 

Let $0<\delta<1$ and define:
\[\alpha\doteq \delta e_{n_1-1}^* + (1-\delta)e_{n_1}^* + \sum_{i=2}^{k+1} x_{n_i}^*e_{n_i}^*
\qquad\text{ and }\qquad
\beta\doteq -\delta e_{n_1-1}^* + (1+\delta)e_{n_1}^* + \sum_{i=2}^{k+1} x_{n_i}^*e_{n_i}^*
\]
Clearly, $\alpha+\beta = 2x^*$. To conclude the proof we need to show that $\alpha,\beta\in B_{V_1^*}$. Given $y\in B_{V_1}$, then:
\begin{align*}
    |\alpha(y)|  &= | \delta y_{n_1-1} + (1-\delta)y_{n_1} -2 y_{n_2} + \cdots (-1)^k 2 y_{n_{k+1}}|\\ 
    &= |\delta(y_{n_1-1} - y_{n_1}) + (y_{n_1} - y_{n_2}) -(y_{n_2}-y_{n_3}) +\cdots \\
    &\hphantom{= \quad}
            + (-1)^{k-1}(y_{n_k} - y_{n_{k+1}}) + (-1)^k y_{n_{k+1}}| \\
            &\leq 
            |y_{n_1-1} - y_{n_1}| + |y_{n_1} - y_{n_2}| + \cdots +|y_{n_k} - y_{n_{k+1}}|+|y_{n_{k+1}}|
\end{align*}

  Notice that, for all $j> n_{k+1}$, $B_j \doteq \{n_1 -1,n_1,\ldots, n_{k+1},j\} \in \per$ and $\nu(y, B_j) \leq 1$. Thus:
\[
|y_{n_1-1} - y_{n_1}| + |y_{n_1} - y_{n_2}| + \cdots +|y_{n_k} - y_{n_{k+1}}|+|y_{n_{k+1}}| = \lim_{j\to\infty} \nu(y, B_j)  \leq 1
\]
and we conclude that $\alpha\in B_{V_1^*}$. In the same way, one can prove that $\beta\in B_{V_1^*}$. Therefore, $x^*\notin \ext\big( B_{V_1^*}\big)$.

$(\Leftarrow)$ Conversely, let $x^* = \sum\limits_{i=1}^{k+1} x_{n_i}^* e_{n_i}^* \in M$ with $k\geq 1$, $x_{n_1}^*=1$, $|x_{n_{k+1}}^*| =2$ and let $\alpha,\beta\in B_{V_1^*}$ be such that $\alpha+\beta = 2x^*$. We split the proof in the following cases:

\underline{\textbf{Case 1:}} $|\supp x^*| = n_1$.

 It follows directly from Lemma~\ref{lemma: functionals coincide on ]n1,nk+1[} that $\alpha|_{]n_1, +\infty[} = \beta|_{]n_1, +\infty[}  = x^*|_{]n_1, +\infty[}$. The proofs of the next two claims are similar to those of Claims 1 and 2 in the previous proposition.

\textbf{Claim 1:} $\alpha_{n_1} =\beta_{n_1} = x_{n_1} =1$.

\begin{proof}[Proof of Claim 1.] Consider the vectors
 $$y = e_{n_1} + \sum_{i=2}^{k+1} (-1)^{i-1} e_{n_i} \quad \text{ and } \quad \bar{y} = e_{n_1}  - \sum_{i=2}^{k+1} (-1)^{i-1} e_{n_i}.$$
Since $|\supp y| =|\supp \bar{y}| = k+1 =| \supp x^*| = n_1$, it follows from (1) of Lemma~\ref{lemma: vectorsyz v2}, that $\Vert y \Vert, \Vert \bar{y} \Vert \leq 2(k+1)-1=2k+1$. The evaluation of $\alpha$ and $\beta$ on these vectors yields $|\alpha_{n_1}|\leq 1$ and $|\beta_{n_1}|\leq 1$. Because $\alpha_{n_1}+\beta_{n_1} = 2$, we conclude that $\alpha_{n_1} = \beta_{n_1} = x_{n_1}^* = 1$.
\end{proof}

\textbf{Claim 2:} $\alpha|_{[1,n_1[}=\beta|_{[1,n_1[} = x^*|_{[1,n_1[} \equiv 0$.

\begin{proof}[Proof of Claim 2.] Assume that $1 \leq j < n_1$ is such that for every $j < m<n_1$, $\alpha_m=\beta_m=0$. By evaluating $\alpha$ and $\beta$ on $\chi_{[j,n_1-1]}+ \bar{y}$, using Remark \ref{extensions} and Lemma \ref{lemma: vectorsyz v2}, we conclude that $\alpha_j=\beta_j=0$. Hence, by induction, we get that $\alpha|_{[1,n_1[} = \beta|_{[1,n_1[}  = x^*|_{[1,n_1[}\equiv 0$.
\end{proof}

This ensures that $\alpha=\beta=x^*$, so that $x^*$ is an extreme point and Case 1 is finished.

  \underline{\textbf{Case 2:}} $|\supp x^*| = n_1-1$.

    Since $n_1 \geq 3$, from (3) of Lemma~\ref{lemma: coordinates} and Lemma~\ref{lemma: functionals coincide on ]n1,nk+1[}, it remains to prove the following claim:
    
    \textbf{Claim 3:} 
    $\alpha|_{[3,n_1]} = \beta|_{[3,n_1]} = x^*|_{[3,n_1]}$. 
    
    \begin{proof}[Proof of Claim 3.] If $n_1=3$, (2) of Lemma~\ref{lemma: coordinates}
    gives that $\alpha_{n_1} = \beta_{n_1} = x^*_{n_1}$, which concludes the proof.

    If $n_1 >3$, for each $3\leq l<n_1$, let
\begin{equation*}
    w_l = - e_l^* + \sum_{i=2}^{n_{k+1}} (-1)^{i-1} e_{n_i}^*.
\end{equation*} 
    Since $|\supp w_l| = k+1 = n_1-1$ and $\min \supp w_l = l \leq n_1-1$, it follows from (3) of Lemma  \ref{lemma: vectorsyz v2} that $\Vert w_l \Vert \leq 2k$. When we evaluate $\alpha$ and $\beta$ on $w_l$, we get that    
    \[
|- \alpha_l + 2k| \leq 2k \quad \text{ and } \quad |-\beta_l + 2k| \leq 2k.    \]
Hence, since $\alpha_l + \beta_l =  2x^*_l =0$, it follows that $\alpha_l = \beta_l =  x^*_l =0$. Now we can apply (2) of Lemma~\ref{lemma: coordinates} and conclude that $\alpha_{n_1}=\beta_{n_1}=x^*_{n_1}$.
\end{proof}
Therefore, $x^* \in \ext\big( B_{V_1^*}\big)$.
\end{proof}

The conjunction of all propositions in this section allows us to characterize the extreme points of $B_{V_1^*}$ as follows:

\begin{theo}\label{teo: extreme points in BV1*}
    An element  $x^* \in B_{V_1^*}$ is an extreme point if, and only if, $x^* = \sum\limits_{i=1}^{k+1} x_{n_i}^*e_{n_i}^*$ is a compatible functional satisfying one of the following conditions:

    \begin{enumerate}
        \item $x^*= \pm e_i^*$, for $i\in \{1,2\}$;
        \item $\supp x^* \in\permax$;
        \item $|x_{n_{k+1}}^*| = 2$ and $n_1-1\leq |\supp x^*| \leq n_1$.
    \end{enumerate}
\end{theo}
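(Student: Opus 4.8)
The plan is to assemble Theorem~\ref{teo: extreme points in BV1*} directly from the three propositions proved in this section, together with the preliminary fact (established via Proposition~\ref{prop: M sign invariand and weak*closed}, Lemma~\ref{lemma: norming set conv} and Milman's Theorem) that $\ext(B_{V_1^*}) \subseteq M$. Since every extreme point is automatically a compatible functional $x^* = \sum_{i=1}^{k+1} x_{n_i}^* e_{n_i}^*$, the task reduces to deciding, among all compatible functionals, exactly which ones are extreme. The key observation is that the three propositions partition the compatible functionals according to the size of the support and the value of the last coordinate $x_{n_{k+1}}^*$, which by compatibility always lies in $\{\pm1, \pm2\}$.

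First I would organize the case split. A compatible functional either has singleton support (handled by Proposition~\ref{Prop: extV1* singleton support}, giving condition~(1)), or has $|\supp x^*| \geq 2$, in which case $k \geq 1$ and the last coordinate satisfies either $|x_{n_{k+1}}^*| = 1$ or $|x_{n_{k+1}}^*| = 2$. The case $|x_{n_{k+1}}^*| = 1$ is governed by Proposition~\ref{Prop: extV1* |x_n{k+1}|=1}, which states $x^* \in \ext(B_{V_1^*})$ iff $\supp x^* \in \permax$; this yields condition~(2). The case $|x_{n_{k+1}}^*| = 2$ is governed by the third proposition, giving $x^* \in \ext(B_{V_1^*})$ iff $n_1 - 1 \leq |\supp x^*| \leq n_1$, which is condition~(3).

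The remaining point requiring care is to verify that conditions~(1)--(3) as listed in the theorem correctly recombine these three disjoint cases into a clean characterization, in particular checking there is no overlap or omission. The singletons $\pm e_i^*$ with $i \in \{1,2\}$ are exactly condition~(1). For $|\supp x^*| \geq 2$ with $|x_{n_{k+1}}^*| = 1$, note that by the definition of compatibility $\lambda_{k+1} = 1$ forces $F \in \permax$ precisely when $\supp x^* \in \permax$, so Proposition~\ref{Prop: extV1* |x_n{k+1}|=1} is faithfully recorded as condition~(2). The $|x_{n_{k+1}}^*| = 2$ case is recorded verbatim as condition~(3). Conversely, any compatible functional satisfying one of (1)--(3) falls into the corresponding proposition and is therefore extreme, giving the reverse inclusion.

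I do not expect a genuine obstacle here, since all the analytic work has been carried out in the three propositions; the proof is essentially a matter of bookkeeping. \textbf{The one subtlety} worth stating explicitly is the reduction at the start of the section allowing us to assume $x_{n_1}^* = 1$: because $x^*$ is extreme iff $-x^*$ is extreme, and because $M$ is symmetric, each condition (1)--(3) is invariant under $x^* \mapsto -x^*$, so the characterization is sign-symmetric and the normalization costs nothing. I would therefore write the proof as a short paragraph invoking $\ext(B_{V_1^*}) \subseteq M$ and then citing the three propositions to dispatch each value of $(|\supp x^*|, |x_{n_{k+1}}^*|)$ in turn.
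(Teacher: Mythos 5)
Your proposal is correct and matches the paper exactly: the paper states Theorem~\ref{teo: extreme points in BV1*} as ``the conjunction of all propositions in this section,'' relying on the inclusion $\ext\big(B_{V_1^*}\big)\subseteq M$ obtained from Proposition~\ref{prop: M sign invariand and weak*closed}, Lemma~\ref{lemma: norming set conv} and Milman's Theorem, and then on the three propositions covering singleton support, $|x_{n_{k+1}}^*|=1$, and $|x_{n_{k+1}}^*|=2$. Your case split and the observation that compatibility forces these cases to be exhaustive and disjoint is precisely the intended bookkeeping.
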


\section{Isometries of \texorpdfstring{$V_1$}{V1}}\label{sec isometries V1}

In this section we will use the set of all compatible functionals $M$, defined on Proposition~\ref{prop: M sign invariand and weak*closed}, and the characterization of the extreme points in $B_{V_1^*}$ given in Theorem~\ref{teo: extreme points in BV1*} to describe $\isom(V_1)$ in terms of signed permutations. 

In \cite{AdBrechFerenczi,BrechPina} it is proved that the only permutation that is allowed when one attempts to describe the group of isometries of the Schreier space is the identity, although there is no restriction over the sequence of signs. Our theorem shows that, in the case of surjective isometries on the James-Schereier space, we are even more restrict: the only surjective isometries in $\isom(V_1)$ are $\pm Id$. There are two crucial differences between the arguments used in the aforementioned papers and the study we have done so far: the first one is the fact that $\ext(B_{V_1^*})$ contains several extreme points with maximal support consisting of two points, namely the points of the form $e_1^* - e_l^*$, and the second one is that the elements of $\ext(B_{V_1^*})$ have alternating signs in their coordinates. The additional extreme points will play an important role to prove that $S(e_n^*) = \pm e_{\sigma(n)}^*$, while the alternating signs of elements in $\ext(B_{V_1^*})$ will restrict the signs that are allowed in the description of $\isom(V_1)$.

\begin{prop}\label{Prop: S(en) has finite supp in V_1*}
    Suppose that $S:V_1^*\rightarrow V_1^*$ is a weak*-weak* continuous operator that preserves extreme points. Then, for all $n\in \N$, $S(e_n^*)$ is a compatible functional.
\end{prop}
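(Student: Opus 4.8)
The plan is to show that each $e_n^*$ lies in the weak$^*$-closure of $\ext(B_{V_1^*})$, and then transport this through $S$. Recall that the remark following Lemma~\ref{lemma: norming set conv} gives $\ext(B_{V_1^*})\subseteq M$, and that $M$ is weak$^*$-closed by Proposition~\ref{prop: M sign invariand and weak*closed}. The key observation is the following: if $e_n^*=w^*\text{-}\lim_m g_m$ with each $g_m\in\ext(B_{V_1^*})$, then, since $S$ is weak$^*$-weak$^*$ continuous and preserves extreme points, $S(e_n^*)=w^*\text{-}\lim_m S(g_m)$ with each $S(g_m)\in\ext(B_{V_1^*})\subseteq M$; as $M$ is weak$^*$-closed, this forces $S(e_n^*)\in M$, i.e. $S(e_n^*)$ is compatible. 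So the whole proof reduces to producing, for every $n$, a sequence of extreme points converging weak$^*$ to $e_n^*$.

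For $n\in\{1,2\}$ there is nothing to construct: Proposition~\ref{Prop: extV1* singleton support} shows that $e_n^*$ is itself an extreme point, so the constant sequence $g_m=e_n^*$ works. For $n\ge 3$ I would define, for each $m\ge n$,
\[
g_m \doteq e_n^* + \sum_{t=1}^{n-2}(-1)^t\,2\,e_{m+t}^*,
\]
whose support is $\{n\}\cup\{m+1,\dots,m+n-2\}$, a permissible non-maximal set with minimum $n$ and cardinality $n-1$. One checks directly that $g_m$ is a compatible functional: its consecutive nonzero coordinates alternate in sign, the first coordinate is $1$, and the last coordinate has modulus $2$ (which is allowed precisely because the support is not in $\permax$). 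Since $\min\supp g_m=n$ and $|\supp g_m|=n-1=(\min\supp g_m)-1$ with last coordinate of modulus $2$, condition (3) of Theorem~\ref{teo: extreme points in BV1*} yields $g_m\in\ext(B_{V_1^*})$. Finally, for each fixed $j$ and all $m\ge\max\{n,j\}$ one has $g_m(e_j)=\delta_{nj}$ (the escaping coordinates $m+t$ all exceed $j$), and since $\|g_m\|\le 1$, coordinatewise convergence on the dense span of the basis gives $g_m\to e_n^*$ in the weak$^*$ topology.

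I do not expect a serious obstacle: Section~\ref{sec extreme points v1*} already supplies an abundance of extreme points, and the argument collapses to checking that $e_n^*$ belongs to $\overline{\ext(B_{V_1^*})}^{w^*}$. The only points requiring care are the bookkeeping that the $g_m$ meet the hypotheses of Theorem~\ref{teo: extreme points in BV1*}(3) — in particular that their support has exactly $(\min\supp)-1$ elements and last coordinate $\pm 2$ — together with the standard fact that a uniformly bounded net in $V_1^*$ converging coordinatewise converges weak$^*$. (If one additionally needs $S(e_n^*)\neq 0$ rather than merely $S(e_n^*)\in M$, this holds in the intended application, where $S$ is the adjoint of a surjective isometry and hence injective.)
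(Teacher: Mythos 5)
Your proposal is correct and takes essentially the same route as the paper: exhibit $e_n^*$ as a weak$^*$ limit of extreme points and push this through $S$ into the weak$^*$-closed set $M$ of compatible functionals. The paper merely asserts that $e_n^*\in w^*\text{-}\cl\big(\ext(B_{V_1^*})\big)$ is ``straightforward to check''; your explicit sequence $g_m$, which satisfies condition (3) of Theorem~\ref{teo: extreme points in BV1*}, supplies exactly that check, and your closing remark about $S(e_n^*)\neq 0$ correctly identifies the same small slack (membership in $M$ versus compatibility) present in the paper's own argument.
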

\begin{proof}
    For every $n\in \N$, using Theorem~\ref{teo: extreme points in BV1*}, it is straightforward to check that $e_n^*\in w^*$-$\cl\big(\ext B_{V_1^*}\big)$. Furthermore, because the set of compatible functionals is weak* closed and contains the extreme points of $B_{V_1^*}$, we have:
    \begin{align*}
        S(e_n^*) \in S\Big[ w^*\text{-}\cl\left(
    \ext\big( B_{V_1^*}\big)\right)\Big] &\subseteq 
     w^*\text{-}\cl\left( S\left[
    \ext\big( B_{V_1^*}\big)\right]\right)\\ &\subseteq
    w^*\text{-}\cl\left(\ext\big( B_{V_1^*}\big)\right) \subseteq
    w^*\text{-}\cl\left(M\right) = M. \qedhere
    \end{align*}
\end{proof}

\begin{prop}\label{prop: S(extmax)subset extmax}
    Let $S\in\isom(V_1^*)$ be a $w^*$-homeomorphism. For every $x^* \in \ext (B_{V_1^*})$:
    \[
    \supp x^*\in\permax \iff \supp S(x^*)\in\permax .
    \]
\end{prop}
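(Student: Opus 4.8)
The plan is to replace the extrinsic condition $\supp x^*\in\permax$ by an intrinsic, purely weak$^*$-topological property of the set $\ext(B_{V_1^*})$, and then to observe that $S$ automatically preserves such a property. Concretely, I would prove the following characterization: \emph{for $x^*\in\ext(B_{V_1^*})$, one has $\supp x^*\in\permax$ if and only if $x^*$ is an isolated point of $\ext(B_{V_1^*})$ in the weak$^*$ topology.} Granting this, the proposition is immediate: since $S$ is a surjective linear isometry, both $S$ and $S^{-1}$ carry extreme points to extreme points, so $S$ maps $\ext(B_{V_1^*})$ bijectively onto itself; being also a $w^*$-homeomorphism, its restriction to $\ext(B_{V_1^*})$ is a self-homeomorphism and hence sends isolated points to isolated points, giving $\supp x^*\in\permax \iff \supp S(x^*)\in\permax$.

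The structural fact I would exploit is that every extreme point, being a compatible functional (recall $\ext(B_{V_1^*})\subseteq M$), has all of its coordinates in the discrete set $\{-2,-1,0,1,2\}$. Consequently, if a sequence of extreme points $y_m^*$ converges weak$^*$ to an extreme point $x^*$, then for each fixed $j$ the scalars $y_m^*(e_j)$ converge in a discrete set and are therefore eventually equal to $x^*(e_j)$. Writing $N=\max\supp x^*$, this forces $y_m^*|_{[1,N]}=x^*$ for all large $m$; thus any $y_m^*$ distinct from $x^*$ must agree with $x^*$ on $[1,N]$ while carrying additional support strictly above $N$. In other words, the only way to approach an extreme point by distinct extreme points is to append support that escapes to $+\infty$.

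For the isolated direction, suppose $\supp x^*=\{n_1<\cdots<n_{k+1}\}\in\permax$; by the definition of compatible functional its last coordinate satisfies $|x^*(e_{n_{k+1}})|=1$. If some extreme $y^*\neq x^*$ agreed with $x^*$ on $[1,n_{k+1}]$ and carried support above $n_{k+1}$, then $n_{k+1}$ would be an interior point of $\supp y^*$ and hence, by compatibility, $|y^*(e_{n_{k+1}})|=2$, contradicting $|y^*(e_{n_{k+1}})|=|x^*(e_{n_{k+1}})|=1$. Thus $x^*$ is isolated. For the converse I would show that every extreme point with $\supp x^*\notin\permax$ is a $w^*$-accumulation point by producing an explicit sequence of distinct extreme points: the singletons are limits, as $e_1^*=\lim_l(e_1^*-e_l^*)$ and $e_2^*=\lim_l(e_2^*-2e_l^*)$, while any functional with $|x^*(e_{n_{k+1}})|=2$ and $n_1-1\le|\supp x^*|\le n_1$ can be extended by a single alternating coordinate $\pm\mu\,e_l^*$ at a large index $l$ — taking $\mu=1$ when this completes a set in $\permax$ and $\mu=2$ otherwise — so that the resulting functionals remain extreme by Theorem~\ref{teo: extreme points in BV1*} and tend weak$^*$ to $x^*$ as $l\to\infty$. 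The main obstacle is precisely the isolated direction: one must argue carefully that weak$^*$ convergence of extreme points reduces, via the discreteness of the coordinate values, to eventual coordinatewise agreement on initial segments, and that the magnitude-$1$ constraint on the last coordinate of a maximal compatible functional genuinely blocks \emph{every} extension, not merely the evident ones.
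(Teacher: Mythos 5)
Your proof is correct, and it takes a genuinely different route from the paper's. You replace the condition $\supp x^*\in\permax$ by an intrinsic $w^*$-topological invariant of the set $\ext(B_{V_1^*})$ — isolation — after which preservation under $S$ is automatic, and the equivalence comes for free in both directions since $S$ and $S^{-1}$ restrict to a self-homeomorphism of the extreme points. The paper instead argues by contradiction: assuming $\supp S(x^*)\notin\permax$, it appends a suitable coordinate $\delta_{k'+1}e_l^*$ to $S(x^*)$ to produce new extreme points, pulls them back through $S^{-1}$, and uses the $w^*$-nullity of $(S^{-1}(e_l^*))_l$ to extract indices $l_i$ whose images have pairwise disjoint supports lying above $\supp x^*$, so that $\supp(x^*+\delta_{k'+1}S^{-1}(e_{l_i}^*))\notin\per$, contradicting extremality. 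Both arguments lean on the same two ingredients — the classification of Theorem~\ref{teo: extreme points in BV1*} to certify that the appended functionals are extreme, and the discreteness of the coordinate values $\{0,\pm1,\pm2\}$ of compatible functionals to convert $w^*$-proximity into coordinatewise agreement on initial segments — and your "non-isolated" constructions (appending $\pm\mu e_l^*$ with $\mu=1$ or $2$ according to whether the enlarged support becomes maximal) mirror the paper's choice of $\delta_{k'+1}$. What your version buys is conceptual clarity and symmetry: you never need to analyze the supports of $S^{-1}(e_l^*)$ or treat the two implications separately. The one point to state carefully (and you flag it) is the isolated direction: if $y^*\in\ext(B_{V_1^*})$ agrees with $x^*$ on $[1,\max\supp x^*]$ and has extra support above, then $\max\supp x^*$ becomes an interior point of $\supp y^*$, forcing $|y^*(e_{n_{k+1}})|=2$ against the value $1$ dictated by maximality of $\supp x^*$ — this is sound, since $\ext(B_{V_1^*})\subseteq M$ guarantees $y^*$ is compatible.
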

\begin{proof}
    Let $x^*=\sum\limits_{i=1}^{k+1}\lambda_i e_{n_i}^* \in \ext(B_{V_1^*})$ such that $\supp x^* \in \permax$. Then, $y^*=S(x^*) = \sum\limits_{i=1}^{k'} \delta_i e_{m_i}^* \in \ext (B_{V_1^*})$. 
    Suppose, towards a contradiction, that $\supp y^* \in \per \setminus \permax$. Let 
    $$\delta_{k'+1} = \left\{\begin{array}{ll}
         - \sgn \delta_{k'} & \text{ if }y^*= \pm e_1 \text{ or } |\supp y^*|=m_1\\
         - 2 \sgn \delta_{k'} & \text{ if }y^*= \pm e_2 \text{ or } |\supp y^*|=m_1-1 
    \end{array} \right.$$
    Notice that, for every $l > \supp y^*$, $y^* + \delta_{k'+1}e_l^* \in \ext (B_{V_1^*})$, so that $x^* + \delta_{k'+1}S^{-1}(e_l^*)= S^{-1}(y^* + \delta_{k'+1}e_l^*) \in \ext (B_{V_1^*})$. Moreover, since $(e^*_l)_{l}$ converges weak$^*$ to $0$, so does $(S^{-1}(e^*_l))_{l}$. Hence, there is an increasing sequence $(l_i)_i$ such that $(\supp S^{-1}(e^*_{l_i}))_{i}$ is pairwise disjoint and $\supp S^{-1}(e^*_{l_i}) > \supp x^*$ for every $i$. This implies that $\supp S^{-1}(y^* + \delta_{k'+1}e_{l_i}^*) = \supp x^* \cup \supp S^{-1}(e^*_{l_i}) \notin \per$, contradicting the fact that  $S^{-1}(y^* + \delta_{k'+1}e_l^*) \in \ext (B_{V_1^*})$ for $l_i > \supp y^*$. Thus, $\supp S(x^*)\in\permax$. Observe that the reverse implication follows from interchanging the roles of $S$ and $S^{-1}$.
\end{proof}

\begin{lemma}\label{lemma:soma}
    Let $y^*=\sum\limits_{i=1}^k\lambda_i e_{n_i}^*$ be a compatible functional such that $|\supp y^*|=n_1>1$ and $|\lambda_k|=2$. If $z^*$ is a compatible functional such that $y^* + z^* \in \ext (B_{V_1^*})$ and $\supp (y^* +z^*) \in \permax$, then $z^* = \pm e_l^*$ for some $l > n_k$.    
\end{lemma}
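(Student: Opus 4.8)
The plan is to set $w^*\doteq y^*+z^*$, which by hypothesis and Theorem~\ref{teo: extreme points in BV1*} is a compatible functional with $\supp w^*\in\permax$; as usual we may assume $\lambda_1=1$. Throughout I will use the defining shape of a nonzero compatible functional: along its support the coordinates alternate in sign, the first coordinate has modulus $1$, every interior coordinate has modulus $2$, and the last coordinate has modulus $1$ exactly when the support is maximal. The whole statement will follow once I prove the geometric claim that $\supp z^*\subseteq\,]n_k,+\infty[$, i.e. that $z^*$ lives entirely to the right of $y^*$.

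Granting that claim, the conclusion is short. Since $\max\supp y^*=n_k<\min\supp z^*$, the support of $w^*$ is the ordered concatenation $\supp y^*\cupdot\supp z^*$, with no cancellation, and $\min\supp w^*=n_1$. If $z^*$ had at least two support points, then $\min\supp z^*$ would be an interior coordinate of $w^*$ (it has support of $y^*$ below it and support of $z^*$ above it), forcing $|w^*_{\min\supp z^*}|=2$; but $w^*_{\min\supp z^*}=z^*_{\min\supp z^*}$ is the leading coordinate of $z^*$ and hence has modulus $1$, a contradiction. Therefore $|\supp z^*|=1$, and a compatible functional with one-point support is exactly $\pm e_l^*$, here with $l=\min\supp z^*>n_k$. (As a check, $|\supp w^*|=n_1+1=\min\supp w^*+1$, consistent with $\supp w^*\in\permax$.)

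It remains to prove the claim. First, $\max\supp z^*\ge n_k$: otherwise $z^*$ vanishes from $n_k$ on, so $w^*_{n_k}=y^*_{n_k}=\pm2$ is the last nonzero coordinate of $w^*$, contradicting that a compatible functional with maximal support ends in modulus $1$. Now suppose, for contradiction, that $d\doteq\min\supp z^*\le n_k$. The singleton case $|\supp z^*|=1$ is immediate: then $d=\max\supp z^*\ge n_k$ forces $d=n_k$, so $z^*=\pm e_{n_k}^*$ and $\supp w^*=\supp y^*\notin\permax$, a contradiction. So assume $|\supp z^*|\ge2$, whence $|z^*_d|=1$. Since $w^*$ and $y^*$ coincide on $[1,d[$ and (using $\max\supp z^*\ge n_k$) $w^*$ has support strictly above $d$, the index $d$ — or, when $d<n_1$, the index $n_1$ — lies strictly between $\min\supp w^*$ and $\max\supp w^*$, so the corresponding coordinate of $w^*$ must have modulus $2$. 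Confronting this with $w^*_d=y^*_d\pm1\in\{\pm1,\pm3\}$ (resp. $w^*_{n_1}=1+z^*_{n_1}$), and with the fact that the only modulus-$1$ coordinate of $z^*$ other than its leading one is its last — which would have to sit at $\max\supp z^*\ge n_k$ rather than at an index below $n_k$ — rules out every position $d\neq n_1$, and also the case $d=n_1$ with $w^*_{n_1}=2$.

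The one delicate case, and the main obstacle, is $d=n_1$ with $z^*_{n_1}=-1$, so that $w^*_{n_1}=0$ and $\min\supp w^*=:p_1>n_1$. Here cancellation \emph{propagates}: I will show by induction along $\supp y^*$ that, as long as $n_i<p_1$, the alternating-sign and interior-modulus-$2$ constraints force $z^*_{n_i}=-y^*_{n_i}$ and force $z^*$ to carry no support strictly between consecutive $n_i$'s, so that $z^*$ coincides with $-y^*$ on an initial block $\{n_1,\dots,n_j\}$ with $n_j<p_1\le n_{j+1}$. Inspecting the leading coordinate of $w^*$ at $p_1$ then gives a contradiction in every branch: if $p_1<n_{j+1}$ then $w^*_{p_1}=z^*_{p_1}$ is an interior coordinate of $z^*$ (modulus $2$) yet the leading coordinate of $w^*$ (modulus $1$); if $p_1=n_{j+1}$, then either the modulus-$2$ continuation of $z^*$ makes $w^*_{p_1}=0$, contradicting $p_1\in\supp w^*$, or $z^*$ terminates and $w^*$ collapses to the single point $\{n_k\}\notin\permax$. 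All branches contradict $\supp w^*\in\permax$. Hence $d\le n_k$ is impossible, the claim holds, and the lemma follows. The verification that enough of $\supp y^*$ survives to keep the relevant index interior, and the bookkeeping of signs in the propagation step, are the only places needing care.
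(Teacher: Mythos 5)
Your route is genuinely different from the paper's: you try to localize $\supp z^*$ entirely to the right of $n_k$ and then read off $|\supp z^*|=1$ from the interior-coordinate condition, whereas the paper counts the modulus-one coordinates of $y^*+z^*$ (exactly two, at the endpoints of its $\permax$ support), first pins down the last coordinate of $z^*$, and then the size of its support. Your endgame and your first step ($\max\supp z^*\ge n_k$) are correct. But the core of your argument has a genuine gap: at several places you declare an index (your $d$, or $n_1$, or $p_1$) to be \emph{interior} to $\supp(y^*+z^*)$ and conclude that the corresponding coordinate has modulus $2$. The lower bound $\min\supp(y^*+z^*)<d$ is easy, but the needed upper bound $\max\supp(y^*+z^*)>d$ (resp.\ $>n_1$) is never established, and it is not automatic: $\max\supp z^*\ge n_k$ is a statement about $z^*$, not about $y^*+z^*$, and the two functionals can cancel on all high coordinates. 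For instance, in your sub-case $d<n_1$ you must separately exclude the scenario $z^*=-y^*$ on $]n_1,+\infty[$, in which $y^*+z^*$ is supported inside $[d,n_1]$ and the index $n_1$ is not interior at all; ruling this out requires playing the permissibility bound $|\supp z^*|\le \min\supp z^*+1$ against $|\supp y^*|=n_1$, an ingredient your write-up never invokes. This cancellation at the top is precisely the difficulty the paper's proof confronts directly, and it is where the hypothesis $|\supp y^*|=n_1$ (i.e.\ $\supp y^*\notin\permax$) actually does its work.

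There are also two smaller but real omissions in your ``delicate case'' $d=n_1$, $z^*_{n_1}=-1$. Your dichotomy $p_1<n_{j+1}$ versus $p_1=n_{j+1}$ silently assumes $j<k$; the branch $p_1>n_k$ (full cancellation of $z^*$ against $y^*$ along all of $\supp y^*$) is not covered and must be treated separately --- it does fail, since then $y^*+z^*$ is either a singleton-supported functional or has a modulus-$2$ leading coordinate, but you have to say so. Likewise, at $p_1=n_{j+1}$ the value $z^*(e_{n_{j+1}})=0$ forces $|(y^*+z^*)(e_{p_1})|=2$ at the minimum of the support, a branch absent from your ``either\,\dots\,or''. (By contrast, your ``induction along $\supp y^*$'' is unnecessary machinery: $z^*=-y^*$ on $[1,p_1[$ is immediate from the definition of $p_1=\min\supp(y^*+z^*)$.) None of these gaps is fatal to the strategy, but as written the proof asserts the decisive interiority facts rather than proving them.
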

\begin{proof}
First, notice that $y^*$ and $z^*$ being compatible, if $|(y^*+z^*)(e_i)|=1$, then either $|y^*(e_i)|=1$ and $|z^*(e_i)|\in\{0,2\}$ or $|z^*(e_i)|=1$ and $|y^*(e_i)|\in\{0,2\}$. Remark also that if $y^* + z^* \in \ext (B_{V_1^*})$ and $\supp (y^* +z^*) \in \permax$, then it has exactly two coordinates with absolute value equal to $1$, $\min \supp (y^*+z^*)$ and $\max \supp (y^*+z^*)$.

Let $z^*= \sum\limits_{i=1}^{k'}\delta_i e_{m_i}^*$. Let us first notice that $|\delta_{k'}|=1$. Indeed, if $|\delta_{k'}|=2$, then $\max \supp (y^*+z^*) \in\{m_1, n_1\}$. If $\max \supp (y^*+z^*) = m_1$, then $n_1 < m_1 < m_{k'} = n_k$ and the coordinates of $y^*$ and $z^*$ above $m_1$ cancel each other, so that $|\supp (y^*+z^*)| \leq |\supp y^*|$. Since $\supp y^* \notin \permax$ and $\min \supp (y^*+z^*)=\min \supp y^*=n_1$, we get that $\supp (y^*+z^*) \notin \permax$. If $\max \supp (y^*+z^*) = n_1$, similarly we get that $\supp (y^*+z^*) \notin \permax$, both cases contradicting the maximality of $\supp (y^*+z^*)$. Therefore, $|\delta_{k'}|=1$.

Also, if $k'\geq 2$, we would get either three coordinates with absolute value equal to $1$, $\{n_1,n_k,m_1\}$, or two of them would be equal and the only remaining one would have absolute value equal to $1$, both cases contradicting the fact that $y^*+z^*$ has exactly two coordinates with absolute value equal to $1$. Therefore, $k'= 1$ and it is easy to see that $m_1>n_k$ in this case, which concludes the proof.
\end{proof}

In the next result we will prove that $\isom(V_1)$ is trivial and this is obtained through a characterization of some isometries in $\isom(V_1^*)$, as one would expect. We would like to highlight how important Lemma~\ref{lemma:soma} is to control the interaction between the supports of two compatible functionals of the form $S(e_1^*)$ and $S(e_l^*)$. 

\begin{theo}\label{theo: V1 has trivial group of isometries}
If $S\in\isom(V_1^*)$ is a weak$^*$-homeomorphism, then $S=\pm Id$. In particular, we have $\isom(V_1) = \{Id,-Id\}$. 
\end{theo}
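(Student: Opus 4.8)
The plan is to work entirely in the dual and show that a weak$^*$-homeomorphic surjective isometry $S$ of $V_1^*$ sends each basis functional to a signed basis functional, then that the underlying permutation is the identity and the signs are constant. The ``in particular'' follows by duality: any $T\in\isom(V_1)$ has adjoint $T^*\in\isom(V_1^*)$, which is automatically a weak$^*$-homeomorphism, so $T^*=\pm Id$ forces $T=\pm Id$. Throughout I would use that $S$ and $S^{-1}$ preserve extreme points (being isometries) and, by Proposition~\ref{Prop: S(en) has finite supp in V_1*}, that $S(e_n^*)$ and $S^{-1}(e_n^*)$ are compatible for every $n$; since $(e_n^*)$ is a Schauder basis of $V_1^*$ and $S$ is continuous, it suffices to determine the $S(e_n^*)$.

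The first main step is to prove $S(e_n^*)=\pm e_{\sigma(n)}^*$ for a permutation $\sigma$. The key input is that $e_1^*-e_l^*$ is an extreme point with maximal support $\{1,l\}\in\permax$ for every $l\geq 2$, so by Proposition~\ref{prop: S(extmax)subset extmax} each $S(e_1^*-e_l^*)=S(e_1^*)-S(e_l^*)$ is again a maximal-support extreme point, whose interior coordinates must be $\pm 2$ while the least coordinate of any compatible functional is $\pm 1$. Since $e_l^*\to 0$ weak$^*$, for large $l$ the support of $S(e_l^*)$ lies entirely above that of $S(e_1^*)$, and the ``junction'' coordinate $\min\supp S(e_l^*)$ would be a forbidden interior coordinate of the maximal-support functional unless $S(e_l^*)$ is a singleton; this forces $S(e_l^*)=\pm e_{\sigma(l)}^*$ for large $l$ and, comparing supports, pins down $|\supp S(e_1^*)|=\min\supp S(e_1^*)$. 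Hence $S(e_1^*)$ is either $\pm e_1^*$ or a type-(3) functional whose minimum equals its size $n_1\geq 2$ and whose last coordinate is $\pm 2$.

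The hard part is excluding this second alternative for $S(e_1^*)$, and this is exactly where Lemma~\ref{lemma:soma} and surjectivity enter. If $S(e_1^*)=:y^*$ were such a type-(3) functional, then Lemma~\ref{lemma:soma}, applied with this $y^*$ to $z^*=-S(e_l^*)$, upgrades the conclusion to \emph{all} $l\geq 2$: every $S(e_l^*)=\pm e_{\sigma(l)}^*$ is a singleton. A short coordinate analysis of the maximal-support functional $y^*\mp e_{\sigma(l)}^*$ — using that adding $\mp 1$ at a coordinate of absolute value $2$ would create a forbidden interior $\pm 1$ (or value $\pm 3$), and at the value-$1$ minimum would destroy maximality — shows $\sigma(l)\notin\supp y^*$ and $\sigma(l)>n_1$ for every $l$. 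Consequently no $S(e_l^*)$ has a nonzero $e_1$-coordinate, and neither does $y^*$ (whose minimum is $n_1\geq 2$); thus every element of $\ran S$ vanishes at $e_1$, contradicting $e_1^*\in V_1^*$. Therefore $S(e_1^*)=\pm e_1^*$, and rerunning the junction argument — now using that a compatible functional whose support contains $1$ has size at most $2$ — gives $S(e_l^*)=\pm e_{\sigma(l)}^*$ for \emph{every} $l$; injectivity and surjectivity make $\sigma$ a permutation with $\sigma(1)=1$.

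Finally I would determine the signs and $\sigma$. Writing $S(e_n^*)=\epsilon_n e_{\sigma(n)}^*$, compatibility of $S(e_1^*-e_l^*)=\epsilon_1 e_1^*-\epsilon_l e_{\sigma(l)}^*$ — whose two support coordinates must have opposite signs — forces $\epsilon_l=\epsilon_1$ for all $l$, so the signs are a single constant $\epsilon$; replacing $S$ by $\epsilon S$ I may assume $S(e_n^*)=e_{\sigma(n)}^*$. Since $\sigma$ carries $\permax$ sets bijectively to $\permax$ sets and $|A|=\min A+1$ there, it preserves the minimum of every maximal permissible set; applied to the triples $\{2,j,k\}$ this yields $\sigma(2)=2$, and tracking the position of the unique interior ($\pm 2$) coordinate in the maximal-support extreme points supported on $\{2,j,k\}$ shows $\sigma$ is order-preserving on $\{3,4,\dots\}$, hence the identity there. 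Thus $\sigma=Id$ and $S=\pm Id$.
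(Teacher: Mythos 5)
Your proposal is correct and follows essentially the same route as the paper: the same use of the extreme points $e_1^*-e_l^*$ together with Proposition~\ref{prop: S(extmax)subset extmax} and weak$^*$-convergence to get $|\supp S(e_1^*)|=\min\supp S(e_1^*)$, the same appeal to Lemma~\ref{lemma:soma} plus surjectivity to force $S(e_1^*)=\pm e_1^*$ and hence $S(e_n^*)=\epsilon_n e_{\sigma(n)}^*$, and the same alternating-sign argument for the constancy of the signs. The only (immaterial) divergence is the very last step: the paper gets $\sigma(n)=n$ directly by choosing a maximal permissible set with minimum $n$ whose image under $\sigma$ is increasing, whereas you first derive $\sigma(2)=2$ and then order-preservation on $\{3,4,\dots\}$ from the position of the $\pm2$ coordinate; both work.
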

\begin{proof}
Let $S(e_1^*) = \sum\limits_{i=1}^k\lambda_i e_{n_i}^*$ and we prove the following claim:

\textbf{Claim 1.} $|\supp\big(S(e_1^*)\big)| = n_1$.
\begin{proof}[Proof of Claim 1.]
For every $l>1$, we have that $e_1^* - e_l^* \in \ext (B_{V_1^*})$ and $\{1,l\} \in \permax$,
so that $S(e_1^* - e_l^*) \in \ext (B_{V_1^*})$ and $\supp S(e_1^*-e_l^*) \in \permax$, by Proposition~\ref{prop: S(extmax)subset extmax}. Since $(e^*_l)_{l}$ converges weak$^*$ to $0$, so does $(S(e^*_l))_{l}$. Hence, there is an increasing sequence $(l_i)_i$ such that $(\supp S(e^*_{l_i}))_{i}$ is pairwise disjoint. Fix $i$ such that $\supp S(e^*_{l_i}) > \supp S(e_1^*)$. Since $S(e_{l_i}^*)$ is a compatible functional, their first coordinate has absolute value equal to $1$. Also, $\supp S(e_1^* - e_l^*) \in \permax$, so that $S(e_1^* - e_l^*)$ has only two coordinates with absolute value equal to $1$ - the minimum and the maximum of its support. Hence, we conclude that $|\supp S(e_{l_i}^*)|=1$, so that $|\supp S(e_1^*)|=n_1$.
\end{proof}

Let us now show that $S$ is induced by a signed permutation. It suffices to prove the following claim:

\textbf{Claim 2.} $|\supp S(e_l^*)|=1$ for every $l \in \mathbb{N}$. 
\begin{proof}[Proof of Claim 2.]
Recall that $y^* = S(e_1^*) = \sum\limits_{i=1}^k\lambda_i e_{n_i}^*$ is an extreme point and it follows from Claim 1 that $|\supp y^*|=n_1$. Suppose, towards a contradiction, that $n_1>1$, in particular $|\lambda_k|=2$ because $\supp y^*\notin\permax$.

Given $l\geq 2$ and let $z_l^*= S(e_l^*)$. It follows from Proposition~\ref{Prop: S(en) has finite supp in V_1*} that $z_l^*$ is a compatible functional and observe that $y^*-z_l^*=S(e_1^*-e_l^*)\in\ext (B_{V_1^*})$ and $\supp(y^*-z_l^*) \in \permax$ by Proposition~\ref{prop: S(extmax)subset extmax}. Now, Lemma \ref{lemma:soma} implies that $z_l^*=\pm e_{m_l}^*$ for some $m_l > n_k$. 

The reader may now notice that $\ran(S) \subseteq \overline{span} (\{e^*_{m_l}:l \geq 2\} \cup \{y^*\})$, which yields that $e_1^*\notin \ran(S)$, a contradiction. Therefore $|\supp y^*|=n_1=1$ and we can write $y^*=\delta_1 e_{n_1}^*$. For all $l\geq 2$, in order to $\supp S(e_1^*-e_l^*)\in\permax$, we must have $|\supp S(e_l^*)|=1$ and this concludes Claim 2.
\end{proof}

Claim 2 implies that there is a bijection $\sigma$ of $\mathbb{N}$ such that for each $n\in\N$, $S(e_n^*)=\epsilon_n e_{\sigma(n)}^*$, where $|\epsilon_n|=1$. We fix such $\sigma$ for the rest of the proof.

\textbf{Claim 3.} For all $n\in\N$, $\epsilon_n = \epsilon_1$ and $\sigma(n) = n$.
\begin{proof}[Proof of Claim 3.]
Observe that the signs of the coordinates of $S(e_1^*-e_n^*) = \epsilon_1 e_1^* - \epsilon_n e_{\sigma(n)}^*$ are alternating, thus $\epsilon_n = \epsilon_1$. Moreover, for every $n \in \mathbb{N}$, let $n=m_1<\cdots<m_{n+1}$ be such that $\sigma(m_1)<\cdots<\sigma(m_{n+1})$ and notice that there is $x^*\in \ext (B_{V_1^*})$ such that $\supp x^*= \{m_1, \dots, m_{n+1}\} \in \permax$. Hence, 
$$S(x^*)= \sum\limits_{i=1}^{n+1}\lambda_i S(e_{m_i}^*)= \sum\limits_{i=1}^{n+1}\lambda_i e_{\sigma(m_i)}^*  \in \ext (B_{V_1^*}),$$
and $\{\sigma(m_1), \dots, \sigma(m_{n+1})\} = \supp S(x^*) \in \permax$. We conclude that $\sigma(n)=\sigma(m_1)= \min \supp S(x^*) = |\supp S(x^*)|-1=n$.
\end{proof}

From what we have done, we conclude that $S=\pm Id$. Now, let $T\in\isom(V_1)$, it follows that $T^* = \pm Id_{V_1^*}$ and we have:
\[
e_m^*(T(e_n)) = (e_m^*\circ T)(e_n) = T^*(e_m^*)(e_n) = \pm e_m^*(e_n),
\]
and we conclude that $T=\pm Id_{V_1}$.
\end{proof}
\section{A rigidity result on Lorentz sequence spaces}
\label{sec lorentz space}
Given a decreasing sequence $w=(w_n)_{n\geq 1}$ of positive real numbers  such that $w\in c_0\setminus \ell_1$, we define the Lorentz sequence space, denoted by $d(w,1)$, as follows:
\[
d(w,1)=\{ x=(x_n)_{n\geq 1} \in \R^\N: \|x\|_{w,1}<\infty \},
\]
 where $\|x\|_{w,1} = \sup\limits_{\pi \in S_\infty} \sum\limits_{i=1}^\infty |x_{\pi(n)}|w_n$ and $S_\infty$ denotes the set of all permutations of natural numbers.
 In this case, $w$ is called a sequence of weights, the space $d(w,1)$ endowed with $\|\cdot\|_{w,1}$ is a Banach sequence space and the canonical vectors $(e_n)_{n\geq1}$ form a symmetric Schauder basis. We will omit the index in the norm to improve readability.

 It is well known that $\|x\|$ is attainend by its decreasing rearrangement, i.e., the sequence $\widetilde x = (\widetilde x_n)_{n\geq 1}$ that is obtained from rearranging $(|x_n|)_{n\geq 1}$ as a decreasing sequence via a permutation.  In what follows, we will use the following notation: $W(n) = \sum\limits_{i=1}^n w_i$. The reader may notice that because $w\notin \ell_1$ is decreasing, we cannot have $w_n = 0$, thus $(W(n))_{n\geq 1}$ is strictly increasing. This will be important in the following results. We also include the following characterization of the extreme points of $d(w,1)$:

 \begin{theo}[{\cite[Theorem 2.6]{AnnaLeeExtreme}}] \label{Theo: extremal Anna}
 An element  $x\in S_{d(w,1)}$ is an extreme point of $B_{d(w,1)}$ if, and only if, there exists $n_0\in\N$ such that:
\[
\widetilde x_n = \begin{cases}
\dfrac{1}{W(n_0)}&,  \text{ if } n\leq n_0\\
0 &, \text{ if } n>n_0
\end{cases}
\]
and $w_1>w_{n_0}$, when $n_0>1$.
 \end{theo}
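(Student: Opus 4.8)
The plan is to prove both implications after reducing to nonnegative, nonincreasing vectors. Since coordinatewise sign changes and permutations of $\N$ induce surjective isometries of $d(w,1)$ (the norm is manifestly invariant under both), they map $\ext(B_{d(w,1)})$ onto itself; hence $x$ is extreme if and only if its decreasing rearrangement $\widetilde x$ is. Writing $b_{n_0}\doteq \frac{1}{W(n_0)}\sum_{i=1}^{n_0}e_i$, it therefore suffices to show that a nonincreasing $x\ge 0$ with $\|x\|=1$ is extreme exactly when $x=b_{n_0}$ for some $n_0$ with $w_1>w_{n_0}$ (the weight condition being vacuous for $n_0=1$).

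For necessity I would use the telescoping ``layer-cake'' identity $x=\sum_{i=1}^\infty (x_i-x_{i+1})\,\chi_{[1,i]}$, which converges in $d(w,1)$ because $x_i\to 0$. As $x$ is nonincreasing, $x_i-x_{i+1}\ge 0$, and using $\|\chi_{[1,i]}\|=W(i)$ together with $\sum_{i\ge k}(x_i-x_{i+1})=x_k$ one checks that the scalars $\lambda_i\doteq(x_i-x_{i+1})W(i)$ are nonnegative with $\sum_i\lambda_i=\sum_i w_i x_i=\|x\|=1$. Thus $x=\sum_i\lambda_i b_i$ is a convex combination of the norm-one vectors $b_i$. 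If $x$ is extreme and more than one $\lambda_i$ were positive, picking $j$ with $0<\lambda_j<1$ and splitting off that term writes $x$ as a proper convex combination of $b_j$ and an element of the closed convex set $B_{d(w,1)}$, forcing $b_j=x$; since the $b_i$ are pairwise distinct this is impossible, so exactly one $\lambda_{n_0}=1$ and $x=b_{n_0}$. Finally, if $n_0>1$ and $w_1=w_{n_0}$ then $w_1=\dots=w_{n_0}$, and for small $t$ the vectors $b_{n_0}\pm t(e_1-e_2)$ stay nonnegative and have norm $1$ (the norm of any vector supported in $[1,n_0]$ whose entries dominate the rest equals $w_1$ times the sum of its entries), so $b_{n_0}$ is not extreme; hence $w_1>w_{n_0}$.

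For sufficiency I would prove that $b_{n_0}$ is extreme when $w_1>w_{n_0}$ via the norming functional $f\doteq\sum_{i=1}^{n_0}w_i e_i^*$. For every $v\in B_{d(w,1)}$ one has $f(v)\le\sum_{i=1}^{n_0}w_i|v_i|\le\sum_{i=1}^{n_0}w_i\widetilde v_i\le\sum_{i=1}^\infty w_i\widetilde v_i=\|v\|\le 1$, the middle step being the Hardy--Littlewood rearrangement inequality, and $f(b_{n_0})=1$. So if $b_{n_0}=\tfrac{y+z}{2}$ with $y,z\in B_{d(w,1)}$, equality in the convexity estimate gives $f(y)=f(z)=1$, and both lie in the exposed face $F=\{v\in B_{d(w,1)}:f(v)=1\}$. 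The equality cases of the displayed chain show that each $v\in F$ is nonnegative, supported in $[1,n_0]$, satisfies $\sum_{i=1}^{n_0}w_i v_i=1$, and is \emph{block-ordered}: $w_i>w_j\Rightarrow v_i\ge v_j$ for $i,j\le n_0$. Now $w_1>w_{n_0}$ means $w$ takes at least two values on $[1,n_0]$, yielding level sets $L_1,\dots,L_m$ with $m\ge 2$. Applying block-order to both $y$ and $z=2b_{n_0}-y$ on adjacent level sets gives $\max_{L_a}y\le\min_{L_{a+1}}y\le\max_{L_{a+1}}y\le\min_{L_a}y\le\max_{L_a}y$, forcing $y$ to be constant on $L_a\cup L_{a+1}$; chaining over $a$ makes $y$ constant on $[1,n_0]$, and then $\sum w_i y_i=1$ yields $y=b_{n_0}=z$. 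The case $n_0=1$ is immediate, as $F$ reduces to the single point $b_1$.

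I expect sufficiency to be the main obstacle. The necessity is essentially forced once the layer-cake convexity decomposition is in hand, but sufficiency requires correctly extracting block-ordering from the equality cases of the rearrangement inequality and seeing precisely how the hypothesis $w_1>w_{n_0}$ (and not, say, strict monotonicity of the whole weight sequence) is exactly the condition that collapses the exposed face $F$ to the single point $b_{n_0}$, while its failure $w_1=w_{n_0}$ leaves room for the explicit $e_1-e_2$ perturbation.
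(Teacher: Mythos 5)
The paper does not prove this statement: it is quoted verbatim from the literature (Theorem 2.6 of the cited reference) and used as a black box, so there is no in-paper proof to compare against. Judged on its own, your argument is essentially sound and follows what is the standard route for such characterizations: a telescoping ``layer-cake'' convex decomposition into the candidate extreme points for necessity, and an exposing functional together with the equality cases of the rearrangement inequality for sufficiency. The sufficiency half is complete and correct: the face $F=\{v\in B_{d(w,1)}: f(v)=1\}$ does consist exactly of the nonnegative, block-ordered vectors supported in $[1,n_0]$ with $\sum_{i\le n_0}w_iv_i=1$, and your two-sided block-order argument applied to $y$ and $2b_{n_0}-y$ correctly collapses $F$-midpoints to $b_{n_0}$ precisely when $w_1>w_{n_0}$.

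There is one step that fails as literally written. The opening reduction asserts that $x$ is extreme if and only if $\widetilde x$ is, on the grounds that permutations and sign changes are isometries. But $\widetilde x$ is obtained from $|x|$ by a permutation of $\mathbb{N}$ only when $\supp x$ is finite; if $x$ has infinitely many nonzero and infinitely many zero coordinates (or infinitely many nonzero and finitely many zero ones), the multiset of values of $\widetilde x$ differs from that of $|x|$, and no surjective isometry of the space carries $x$ to $\widetilde x$. So the necessity direction is not justified for infinitely supported $x$ --- which is exactly the case one must rule out. The repair is easy and stays within your own framework: run the layer-cake decomposition on $x$ itself, writing $x=\sum_i(\widetilde x_i-\widetilde x_{i+1})\,\epsilon\,\rchi_{A_i}$ where $A_1\subseteq A_2\subseteq\cdots$ are nested sets of positions of the $i$ largest values of $|x|$ and $\epsilon=\sgn(x)$; each $\epsilon\rchi_{A_i}/W(i)$ has norm one and the same conclusion follows. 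A second, smaller gloss: the norm convergence of the layer-cake series is not just ``because $x_i\to 0$'' --- it requires the observation that $\widetilde x_N W(N)\to 0$ whenever $\sum_i w_i\widetilde x_i<\infty$ (true, by splitting $W(N)$ at a fixed $M$ and using the tail of the convergent series), or alternatively it can be avoided entirely by splitting off a single term $\lambda_j b_j$ and computing $\|x-\lambda_jb_j\|=1-\lambda_j$ directly, since $x-\lambda_jb_j$ is again nonnegative and nonincreasing.
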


 Without passing through the decreasing rearrangement $\widetilde x$ of $x$, the previous result states that $x\in \ext B_{d(w,1)}$ if, and only if, $x = \tfrac{1}{W(n_0)}\epsilon \rchi_{A}$, where $|A| = n_0$, $\epsilon \in \{-1,1\}^A$ and, when $A$ is not a singleton, $w_{n_0}<w_1$. Although this notation is longer than the one used in Theorem~\ref{Theo: extremal Anna}, the computations we will carry out in this section are done using the support of the elements in $d(w,1)$, thus the need of introducing the signs of each non-null coordinate.

Extending the language of \cite{AntunesBeanlandsurvey}, given two Banach sequence spaces $X$ and $Y$ and $\lambda\in\R$, we say that $\isom(X,Y)$ is $\lambda$-\emph{standard} if it is composed by all $T:X\rightarrow Y$ given by 
\begin{equation}\label{eq: standard group of isometries def}
   T(x_n) = (\lambda\epsilon_n x_{\pi(n)})
\end{equation}
where $\epsilon = (\epsilon_n)_n \in \{-1,1\}^\N $ is a sequence of signs and $\pi\in S_\infty$ is a permutation. We say that an operator satisfying equation~\eqref{eq: standard group of isometries def} is induced by the scalar $\lambda$, the permutation $\pi$ and the sequence of signs $\epsilon$. Notice that if $X=Y$ and $\lambda=1$, this coincides with the definition of standard group of isometries.

In this section we aim to prove that $\isom\big(d(v,1),d(w,1)\big)$ is either empty or $\lambda$-standard, depending on the weight sequences. As a consequence, we obtain a Banach-Stone like result for Lorentz sequences spaces that is similar to the one presented in \cite{CarothersIsometrisLorentz} for the Lorentz function spaces $L_{w,p}$ and, the space $d(w,1)$ and it's predual $d_*(w,1)$ have standard group of isometries. In our arguments, we will make use of certain vectors which are supported in 2 or 3 elements of the canonical basis and we will need them to be extreme points of the unit ball. This is guaranteed for spaces whose sequence of weights $w$ is strictly decreasing. Hence, from now on we shall work under this assumption. In particular, we get from the previous theorem that
\begin{equation*}ext(B_{d_{w,1}}) = \left\{ \frac{1}{W(n)} \epsilon \chi_A: n \in \mathbb{N}, |A|=n \text{ and } \epsilon \in \{-1,1\}^A \right\}.
\end{equation*}

\begin{lemma}\label{lemmav2: supp igual ou disjunto}
    If $T\in \isom\big(d(v,1);d(w,1)\big)$, then, for all $r,s\in\N$,
    \[
    \supp T(e_r) = \supp T(e_s)
    \qquad\text{or}\qquad
    \supp T(e_r)\cap \supp T(e_s) = \emptyset.
    \]
\end{lemma}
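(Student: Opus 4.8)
The plan is to exploit the fact that a surjective isometry $T$ must map extreme points of $B_{d(v,1)}$ bijectively onto extreme points of $B_{d(w,1)}$, together with the explicit description of those extreme points given after Theorem~\ref{Theo: extremal Anna}. Since each basis vector $e_r$ is itself an extreme point (take $n=1$, $A=\{r\}$), its image $T(e_r)$ must be an extreme point of $B_{d(w,1)}$, hence of the form $\tfrac{1}{W(k_r)}\epsilon^{(r)}\chi_{A_r}$ with $|A_r|=k_r$. So for each $r$ the image $T(e_r)$ is a normalized signed indicator of some finite set $A_r=\supp T(e_r)$. The goal is to show that for any two indices $r,s$ these supports are either identical or disjoint.

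First I would fix $r\neq s$ and suppose, toward a contradiction, that $A_r$ and $A_s$ overlap without being equal, so that $A_r\setminus A_s$, $A_s\setminus A_r$ and $A_r\cap A_s$ are all relevant. The key idea is to test $T$ against cleverly chosen extreme points supported on $\{e_r,e_s\}$ (and possibly a third basis vector). Because $\tfrac{e_r\pm e_s}{W(2)}$ are extreme points of $B_{d(v,1)}$ whenever $v_1>v_2$, which holds since $v$ is strictly decreasing, their images $\tfrac{1}{W(2)}\bigl(\epsilon^{(r)}\chi_{A_r}\pm\epsilon^{(s)}\chi_{A_s}\bigr)$ must again be extreme points of $B_{d(w,1)}$, namely normalized signed indicators of sets. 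Here I would use the notation $W(n)=\sum_{i=1}^n w_i$ for the $w$-weights and a separate symbol, say $V(n)=\sum_{i=1}^n v_i$, for the $v$-weights, being careful that the normalizing constant $W(2)$ on the domain side refers to $v$. The strategy is that the linear combination $\epsilon^{(r)}\chi_{A_r}+\epsilon^{(s)}\chi_{A_s}$ can only be a (scalar multiple of a) signed indicator if all its nonzero entries have the same absolute value; on the overlap $A_r\cap A_s$ the entries are sums or differences of two unit-modulus numbers, while off the overlap they remain of unit modulus, and forcing these to match will pin down strong constraints.

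The main obstacle I expect is the bookkeeping of signs on the overlap: an entry in $A_r\cap A_s$ equals $\epsilon^{(r)}_j\pm\epsilon^{(s)}_j$, which is $0$ or $\pm 2$, so for both $T(e_r+e_s)$ and $T(e_r-e_s)$ to be extreme points one must rule out partial cancellation. Comparing the two images simultaneously is the leverage: if the overlap is nonempty and proper, then in one of the combinations the overlap entries survive with modulus $2$ while the non-overlap entries have modulus $1$, so the result cannot be a signed indicator, contradicting extremality. I would also need to verify that the relevant vectors genuinely are extreme points, invoking the strict-decrease hypothesis $v_1>v_2$ so that two-element supports qualify, and likewise $w_1>w_{|A|}$ on the image side; this is where the standing assumption that the weights are strictly decreasing is essential and must be cited explicitly.

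Finally I would assemble these constraints to conclude that the only surviving possibility is $A_r=A_s$ (when the tests force the supports to coincide) or $A_r\cap A_s=\emptyset$ (when no overlap can be sustained), which is exactly the dichotomy claimed. A clean way to organize the argument is to assume $A_r\cap A_s\neq\emptyset$ and then show $A_r=A_s$: using the image of $\tfrac{e_r+e_s}{V(2)}$ being a signed indicator forces $|A_r|=|A_s|$ and, together with the overlap being nonempty, forces the supports to be equal, since any index lying in exactly one of the two sets would produce an entry of modulus $1$ alongside an entry of modulus $2$ on the overlap. The bulk of the remaining work is the routine verification of these modulus comparisons, which I would carry out by evaluating norms of the candidate images and comparing with the normalization dictated by extremality.
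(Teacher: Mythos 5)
Your proposal is correct and follows essentially the same route as the paper: both arguments test $T$ on a two-element-support extreme point of $B_{d(v,1)}$ (legitimate because $v_1>v_2$) and derive a contradiction by comparing the modulus of a coordinate of the image on $A_r\cap A_s$ with that of a coordinate on the symmetric difference. The paper streamlines this by fixing a single $q\in A_r\cap A_s$ and using the one vector $z=\frac{\epsilon^r(q)e_r+\epsilon^s(q)e_s}{V(2)}$, whose signs are chosen so that the two contributions at $q$ reinforce, giving $|T(z)(q)|=\frac{v_1}{V(2)}\bigl(\frac{1}{W(n_r)}+\frac{1}{W(n_s)}\bigr)>\frac{v_1}{V(2)W(n_r)}=|T(z)(p)|$ for $p\in A_r\setminus A_s$; this removes the need to consider both $\pm$ combinations and also avoids your implicit assumption that the overlap entries are sums of two \emph{unit}-modulus numbers, since a priori the normalizations $\frac{1}{W(n_r)}$ and $\frac{1}{W(n_s)}$ may differ (their equality is only established in the subsequent lemma).
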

\begin{proof}
For each $r \in \mathbb{N}$, by Theorem~\ref{Theo: extremal Anna}, there is a finite $A_r\subseteq \mathbb{N}$ and $\epsilon^r\in\{-1,1\}^{A_r}$
such that $T(e_r) = \tfrac{v_1}{W(n_r)}\epsilon^r\rchi_{A_r}$. Let $n_r=|A_r|$.

Given distinct $r,s\in\N$, suppose that $A_r \cap A_s \neq \emptyset$ and fix $q\in A_r\cap A_s$. Notice that the point
\[
z=\frac{\epsilon^r(q) e_r +\epsilon^s(q)e_s}{V(2)} \in \ext B_{d(v,1)}
\]
so that
\[
T(z) = \frac{1}{V(2)}\left(\epsilon^r(q)\frac{v_1}{W(n_r)}\epsilon^r\rchi_{A_r} + \epsilon^s(q)\frac{v_1}{W(n_s)}\epsilon^s\rchi_{A_s}\right) \in \ext B_{d(w,1)}.
\] 
In the view of Theorem~\ref{Theo: extremal Anna}, all the non-zero coordinates of $T(z)$ equal to $\frac{1}{W(n)}$ in absolute value, where $n=|\supp T(z)|$. 

Now, if $A_r \neq A_s$ and, without loss of generality, there is $p\in A_r\setminus A_s$, then 
\[
|T(z)(p)| = \frac{v_1}{V(2)W(n_r)} <
\frac{v_1}{V(2)}
\left(\frac{1}{W(n_r)} + \frac{1}{W(n_s)} \right) = |T(z)(q)|,
\]
a contradiction. Hence, if $A_r \cap A_s \neq \emptyset$, then $A_r = A_s$.
\end{proof}

\begin{lemma} \label{lemma: supp T igual supp T-1}
    If $T\in \isom\big(d(v,1);d(w,1)\big)$, then $|\supp T(e_r)| = |\supp T(e_s)|$ for all $r,s\in\N$.
\end{lemma}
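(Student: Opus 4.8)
The plan is to exploit the rigid form of extreme points together with the dichotomy already established in Lemma~\ref{lemmav2: supp igual ou disjunto}. First I would recall, exactly as in the proof of that lemma, that each normalized basis vector $e_r/v_1$ lies in $\ext B_{d(v,1)}$, so its image is extreme and can be written $T(e_r)=\tfrac{v_1}{W(n_r)}\epsilon^r\rchi_{A_r}$ with $n_r=|A_r|=|\supp T(e_r)|$ and $\epsilon^r\in\{-1,1\}^{A_r}$. Fixing $r,s\in\N$, Lemma~\ref{lemmav2: supp igual ou disjunto} leaves two possibilities: either $\supp T(e_r)=\supp T(e_s)$, in which case $n_r=n_s$ trivially; or $A_r\cap A_s=\emptyset$, which is the only case requiring work.

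In the disjoint case the key idea is to feed $T$ a two-supported extreme point and read off the constraint forced by extremality of the image. Since $v$ is strictly decreasing we have $v_1>v_2$, so by Theorem~\ref{Theo: extremal Anna} the vector $\tfrac{1}{V(2)}(e_r+e_s)$ belongs to $\ext B_{d(v,1)}$. As $T$ is a surjective isometry it preserves extreme points, hence
\[
T\!\left(\frac{e_r+e_s}{V(2)}\right)=\frac{v_1}{V(2)}\left(\frac{1}{W(n_r)}\epsilon^r\rchi_{A_r}+\frac{1}{W(n_s)}\epsilon^s\rchi_{A_s}\right)\in\ext B_{d(w,1)}.
\]
Because $A_r$ and $A_s$ are disjoint there is no cancellation, so this image has exactly $n_r$ coordinates of absolute value $\tfrac{v_1}{V(2)W(n_r)}$ and $n_s$ coordinates of absolute value $\tfrac{v_1}{V(2)W(n_s)}$.

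To finish I would invoke Theorem~\ref{Theo: extremal Anna} once more: every nonzero coordinate of an extreme point of $B_{d(w,1)}$ has the same absolute value $1/W(n)$, where $n$ is the cardinality of its support. Applying this to the displayed image forces $\tfrac{v_1}{V(2)W(n_r)}=\tfrac{v_1}{V(2)W(n_s)}$, that is, $W(n_r)=W(n_s)$. Since $w\in c_0\setminus\ell_1$ is a strictly decreasing sequence of strictly positive numbers, $(W(n))_n$ is strictly increasing, so $W(n_r)=W(n_s)$ yields $n_r=n_s$, as desired. I do not anticipate a serious obstacle here: the argument is a direct reuse of the test-vector technique of Lemma~\ref{lemmav2: supp igual ou disjunto}, and the only points to verify carefully are that the chosen two-point vector is genuinely extreme (which is where $v_1>v_2$, hence the strict monotonicity hypothesis on the weights, is used) and that strict monotonicity of $W$ converts the equality of weight sums into equality of cardinalities.
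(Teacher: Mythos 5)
Your proof is correct, but it takes a genuinely different route from the paper's. The paper proves this lemma by a norm computation rather than by extreme-point preservation: assuming $n_r<n_s$ (hence $A_r\cap A_s=\emptyset$ by Lemma~\ref{lemmav2: supp igual ou disjunto}), it evaluates $\|ae_r+be_s\|_{v,1}$ and $\|ae_s+be_r\|_{v,1}$ for suitably chosen $0<b<a$, computes the norms of the images via the decreasing rearrangement, and extracts the identity $\bigl(W(n_r+n_s)-W(n_s)-W(n_r)\bigr)\cdot\bigl(W(n_s)-W(n_r)\bigr)=0$, whose first factor is strictly negative by the strict subadditivity of $W$, forcing $W(n_r)=W(n_s)$ and a contradiction. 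You instead push the two-supported extreme point $\tfrac{1}{V(2)}(e_r+e_s)$ through $T$ and read off that all nonzero coordinates of the (extreme) image must share the common absolute value $1/W(n_r+n_s)$; since the supports are disjoint there is no cancellation, the two blocks of coordinates have absolute values $\tfrac{v_1}{V(2)W(n_r)}$ and $\tfrac{v_1}{V(2)W(n_s)}$, and strict monotonicity of $n\mapsto W(n)$ converts $W(n_r)=W(n_s)$ into $n_r=n_s$. This is exactly the test-vector technique the paper uses for the neighbouring Lemmas~\ref{lemmav2: supp igual ou disjunto} and~\ref{lemmav2: supp disjunto}, so your version is more uniform with the surrounding arguments and somewhat shorter; the paper's computation, by contrast, only uses the isometry property on two non-extreme vectors and records along the way the strict inequality $W(n_r+n_s)<W(n_r)+W(n_s)$, which is where it spends the hypothesis that the weights are strictly decreasing (your argument spends it on the extremality of $\tfrac{1}{V(2)}(e_r+e_s)$, i.e.\ on $v_2<v_1$). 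Both routes are valid; there is no gap in yours.
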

\begin{proof}
For each $r \in \mathbb{N}$, by Theorem~\ref{Theo: extremal Anna}, there is a finite $A_r\subseteq \mathbb{N}$ and $\epsilon^r\in\{-1,1\}^{A_r}$
such that $T(e_r) = \tfrac{v_1}{W(n_r)}\epsilon^r\rchi_{A_r}$. Let $n_r=|A_r|$.

Suppose, towards a contradiction, that 
 for some distinct $r$ and $s$, $n_r<n_s$. Then, by Lemma~\ref{lemmav2: supp igual ou disjunto}, $A_r\cap A_s = \emptyset$.

Fix $a>0$ and let $0<b< \dfrac{a W(n_r)}{W(n_s)}<a$. Notice that $\dfrac{b}{W(n_s)}<\dfrac{a}{W(n_r)}$ and $\dfrac{b}{W(n_r)}< \dfrac{a}{W(n_s)}$, thus, we can compute:

\begin{equation*}
    av_1+b v_2 = \| ae_r + b e_s\|_{v,1} = \| aT(e_r) + b T(e_s)\|_{w,1} = av_1 + \frac{bv_1}{W(n_s)}( W(n_s+n_r) - W(n_r))
\end{equation*}
and
\begin{equation*}
    av_1+bv_2 = \| ae_s + b e_r\|_{v,1} = \| a T(e_s)+ b T(e_r)\|_{w,1} = av_1 + \dfrac{bv_1}{W(n_r)} (W(n_r+n_s) - W(n_s))
\end{equation*}

Manipulating the former equations yields:
\begin{equation*} 
  \Big( W(n_r+n_s) - W(n_s) - W(n_r) \Big)\cdot\Big( W(n_s)-W(n_r)\Big)=0.  
\end{equation*}

However, $W(n_r)<W(n_s)$ because $n_r< n_s$ implies the following:
\[
W(n_r + n_s) = w_1+\cdots + w_{n_s} + \underbrace{ w_{n_s+1}+\cdots w_{n_s+n_r}}_{<W(n_r)} < W(n_s) + W(n_r),
\]
a contradiction.
\end{proof}

\begin{lemma}\label{lemmav2: supp disjunto}
    If $T\in \isom\big(d(v,1);d(w,1)\big)$, then:
    \[
    \supp T(e_r)\cap \supp T(e_s) = \emptyset
    \]
and, therefore, $|\supp T(e_r)|=1$ for every $r \in \mathbb{N}$.
\end{lemma}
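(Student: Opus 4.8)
The plan is to leverage the two preceding lemmas: together they say that the supports $\supp T(e_r)$ all share a common cardinality $n$ and are pairwise either equal or disjoint, so it suffices to (i) rule out the ``equal'' alternative, obtaining pairwise disjointness, and then (ii) deduce $n=1$. Throughout I will use that the surjective isometry sends the extreme point $e_r/v_1$ to an extreme point of $B_{d(w,1)}$, so by Theorem~\ref{Theo: extremal Anna} together with Lemma~\ref{lemma: supp T igual supp T-1} one may write $T(e_r)=\frac{v_1}{W(n)}\epsilon^r\rchi_{A_r}$ with $|A_r|=n$ and $\epsilon^r\in\{-1,1\}^{A_r}$.

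For step (i) I would argue by contradiction, assuming $A_r=A_s=:A$ for some $r\neq s$. Injectivity of $T$ forces $\epsilon^r\neq\epsilon^s$, so these sign vectors agree on a set $P$ with $p:=|P|\le n-1$, and in particular $n\ge 2$. The idea is to extract the weight $v_3$ in two different ways and compare. First, evaluating the norm on $a_1e_{t_1}+a_2e_{t_2}+a_3e_{t_3}$ with $t_1,t_2,t_3$ in three pairwise disjoint supports (these exist, since there are infinitely many basis vectors but only finitely many share any fixed support, hence there are infinitely many distinct and thus disjoint supports) and $a_1>a_2>a_3>0$: the image has $3n$ nonzero coordinates grouped by block, and reading off the coefficient of $a_3$ yields
\[
v_3=\frac{v_1}{W(n)}\bigl(W(3n)-W(2n)\bigr).
\]
Second, evaluating on $ae_r+ae_s+be_t$ with $a>b>0$ and $e_t$ in a block $B$ disjoint from $A$: on the image the $P$-coordinates have modulus $\frac{2av_1}{W(n)}$ while the $B$-coordinates have modulus $\frac{bv_1}{W(n)}$, so these $p+n$ nonzero coordinates, taken in decreasing order, pair the $p$ large ones with $w_1,\dots,w_p$ and the $n$ smaller ones with $w_{p+1},\dots,w_{p+n}$; comparing with the domain norm $aV(2)+bv_3$ and reading off the coefficient of $b$ gives
\[
v_3=\frac{v_1}{W(n)}\bigl(W(p+n)-W(p)\bigr).
\]

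Equating the two expressions for $v_3$ produces $W(p+n)-W(p)=W(3n)-W(2n)$, that is $\sum_{i=p+1}^{p+n}w_i=\sum_{i=2n+1}^{3n}w_i$. Both sums have exactly $n$ terms, but $p\le n-1$ forces every index on the left to be strictly smaller than every index on the right, so the strict decrease of $w$ makes the left-hand sum strictly larger, a contradiction. Hence no two supports coincide and, by the dichotomy lemma, they are pairwise disjoint. I expect this to be the crux: the naive comparisons (a purely within-block vector against a purely cross-block vector) only yield weight identities that some admissible sequences genuinely satisfy, so the real work is choosing the mixed three-term test vector that isolates $v_3$ and pits the window $\sum_{p<i\le p+n}w_i$ against $\sum_{2n<i\le 3n}w_i$, where strict monotonicity finally bites.

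For step (ii), I would apply the disjointness just proved to $T^{-1}\in\isom\big(d(w,1),d(v,1)\big)$, whose basic supports are therefore pairwise disjoint of a common size $m$. Writing $e_r=\frac{v_1}{W(n)}\sum_{j\in A_r}\epsilon^r(j)\,T^{-1}(e_j)$ and using that the supports $\supp T^{-1}(e_j)$ are pairwise disjoint, no cancellation can occur, so the right-hand side is supported on a disjoint union of $n$ sets each of size $m$. Comparing with $|\supp e_r|=1$ forces $nm=1$, hence $n=1$, which is exactly the claim.
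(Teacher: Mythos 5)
Your proof is correct, but the route you take through the crucial disjointness step is genuinely different from the paper's. The paper also tests a three-term vector, namely the extreme point $\tfrac{1}{V(3)}(e_r+e_s+e_t)$ with $\supp T(e_t)$ disjoint from the common support $A$, but the mechanism is extreme-point preservation rather than norm computation: since the image must again be an extreme point, Theorem~\ref{Theo: extremal Anna} forces all of its nonzero coordinates to share one modulus, whereas the coordinates on $P$ have modulus $\tfrac{2v_1}{W(n)}$ and those on $\supp T(e_t)$ have modulus $\tfrac{v_1}{W(n)}$ --- an immediate contradiction with no appeal to the strict monotonicity of $w$ and no coefficient matching. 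Your argument instead evaluates $\Vert T(\cdot)\Vert_{w,1}$ on two test vectors, extracts $v_3$ twice by matching affine coefficients on an open cone of parameters, and lets strict monotonicity of $w$ separate the windows $\sum_{p<i\le p+n}w_i$ and $\sum_{2n<i\le 3n}w_i$; this is heavier but sound, and is closer in spirit to the paper's proof of Lemma~\ref{lemma: supp T igual supp T-1}. Two small remarks: you should note the degenerate possibility $p=0$ (i.e.\ $\epsilon^r=-\epsilon^s$, whence $T(e_r+e_s)=0$); your formulas still produce a contradiction there, but it deserves a word. On the plus side, you explicitly justify the existence of pairwise disjoint supports (which the paper's ``let $t$ be such that\dots'' glosses over) and you supply an explicit derivation of $|\supp T(e_r)|=1$ via $T^{-1}$ and the factorization $e_r=T^{-1}(T(e_r))$, a step the paper's proof leaves implicit (it can alternatively be read off from surjectivity, since a vector with pairwise disjoint equimodular blocks of size $n$ in its expansion can only equal $e_m$ if $n=1$). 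Both completions are welcome.
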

\begin{proof}
Suppose $r,s \in \mathbb{N}$ are distinct such that $\supp T(e_r)= \supp T(e_s)$ and let $t$ be such that $\supp T(e_r) \cap \supp T(e_t) = \emptyset$. Let $n= |\supp T(e_r)| = |\supp T(e_t)|$. Hence, the nonzero coordinates of $T(e_r),T(e_s),T(e_t)$ have absolute value equal to $\frac{v_1}{W(n)}$.

Then
$$\frac{1}{V(3)} (e_r + e_s + e_t) \in \ext B_{d(v,1)}.$$
It follows that
$$\frac{1}{V(3)} \left(T(e_r) + T(e_s) + T(e_t)\right) \in \ext B_{d(w,1)},$$
which implies that all nonzero coordinates of $T(e_r) + T(e_s) + T(e_t)$ have the same absolute value. 

On the other hand, it is easy to see that the nonzero coordinates of $T(e_r) + T(e_s)$ have absolute value equal to $\frac{2v_1}{W(n)}$, while the nonzero coordinates of $T(e_t)$ have absolute value equal to $\frac{v_1}{W(n)}$. Since $\supp (T(e_r) + T(e_s))$ is disjoint from $\supp T(e_t)$, we get coordinates of $T(e_r) + T(e_s) + T(e_t)$ with different absolute values, a contradiction. 
\end{proof}

We finally use the previous lemmas to prove the main result of this section:

\begin{theo}\label{theo: iso dv,dw are induced by permutation and scalar}
     Let $v,w\in c_0\setminus \ell_1$ be stritcly decreasing. If $\isom\big(d(v,1);d(w,1)\big)$ is non-empty, then there is $\lambda >0$ such that $v = \lambda w$ and $\isom\big(d(v,1);d(w,1)\big)$ is $\lambda$-standard.
    \end{theo}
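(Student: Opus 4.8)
The plan is to use the three preceding lemmas, together with the extreme-point description in Theorem~\ref{Theo: extremal Anna}, to show that any $T\in\isom(d(v,1),d(w,1))$ sends each basis vector $e_r$ to a signed multiple of a single basis vector; the proportionality of the weights and the $\lambda$-standard form then both drop out of one norm computation, and a short converse shows the whole group is $\lambda$-standard.

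First I would fix $T\in\isom(d(v,1),d(w,1))$ (non-empty by hypothesis). Since $T$ is a surjective isometry it carries $\ext(B_{d(v,1)})$ bijectively onto $\ext(B_{d(w,1)})$; as $\tfrac{1}{v_1}e_r\in\ext(B_{d(v,1)})$, Theorem~\ref{Theo: extremal Anna} yields $T(e_r)=\tfrac{v_1}{W(n_r)}\epsilon^r\chi_{A_r}$ with $n_r=|A_r|$, exactly as in the lemma proofs. By Lemma~\ref{lemmav2: supp disjunto} we have $n_r=1$ for every $r$, so $T(e_r)=\lambda\,\epsilon_r e_{\sigma(r)}$ where $\lambda:=v_1/w_1>0$, $\epsilon_r\in\{-1,1\}$ and $\sigma:\N\to\N$. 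The supports $\supp T(e_r)$ being pairwise disjoint singletons makes $\sigma$ injective, and surjectivity follows from $T$ being onto: writing $e_m=T(y)$ and reading off the $m$-th coordinate forces $m\in\ran\sigma$. Thus $\sigma$ is a permutation, and by linearity and continuity $T(x)=\lambda\sum_n \epsilon_n x_n e_{\sigma(n)}$, which is the $\lambda$-standard form \eqref{eq: standard group of isometries def} with $\pi=\sigma^{-1}$ and signs $\epsilon_{\sigma^{-1}(\cdot)}$.

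Next I would test $T$ on the vectors $\sum_{n=1}^k e_n$. On one side $\big\|\sum_{n=1}^k e_n\big\|_{v,1}=V(k)$; on the other, $T\big(\sum_{n=1}^k e_n\big)$ has exactly $k$ nonzero coordinates, each of absolute value $\lambda$, so its $d(w,1)$-norm is $\lambda W(k)$. Equating gives $V(k)=\lambda W(k)$ for all $k$, and taking successive differences yields $v_k=\lambda w_k$, i.e.\ $v=\lambda w$. To finish, I would record the converse: for $v=\lambda w$, every operator of the form $x\mapsto(\lambda\epsilon_n x_{\pi(n)})$ is a surjective isometry, because permuting coordinates does not change the decreasing rearrangement, so $\|T(x)\|_{w,1}=\lambda\|x\|_{w,1}=\|x\|_{\lambda w,1}=\|x\|_{v,1}$. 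Hence $\isom(d(v,1),d(w,1))$ consists precisely of the $\lambda$-standard operators.

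Since the three lemmas already do the essential geometric work, I expect no serious obstacle. The only points needing care are extracting the surjectivity of $\sigma$ from the surjectivity of $T$ (rather than merely from disjointness of supports, which gives only injectivity) and the index bookkeeping that turns the basis-level permutation $\sigma$ into the coordinate-level permutation $\pi=\sigma^{-1}$ of \eqref{eq: standard group of isometries def}.
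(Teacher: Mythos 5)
Your proposal is correct and follows essentially the same route as the paper: apply Lemma~\ref{lemmav2: supp disjunto} to get $T(e_n)=\lambda\epsilon_n e_{\sigma(n)}$ with $\lambda=v_1/w_1$, then evaluate $T$ on $\sum_{n=1}^k e_n$ to obtain $V(k)=\lambda W(k)$ and hence $v=\lambda w$, and finish with the easy converse. Your explicit argument that surjectivity of $\sigma$ comes from surjectivity of $T$ (not merely from disjointness of supports) is a point the paper glosses over, and is a welcome addition.
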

\begin{proof}
    Let $T\in\isom\big(d(v,1);d(w,1)\big)$. By Lemma~\ref{lemmav2: supp disjunto}, there is $\pi \in S_\infty$ such that, for every $n\in\N$, $T(e_n) = \lambda_{n} e_{\pi(n)}$ and since $T$ is an isometry:
    \[
    v_1 = \|e_n\|_{v,1} = \|T(e_n)\|_{w,1} = \|\lambda_n e_{\pi(n)}\|_{w,1} = |\lambda_{n}|w_1,
    \]
hence, for all $n\in\N$, $|\lambda_{n}| = \dfrac{v_1}{w_1}$ and we conclude that 
$T(e_n) = \lambda\epsilon_{n} e_{\pi(n)}$ with $(\epsilon_{n})\in\{-1,1\}^\mathbb{N}$ and $\lambda=\frac{v_1}{w_1}$.

Now, by induction on $n$ we prove that $\frac{v_n}{w_n} = \lambda$. Assume this holds for $n \in \mathbb{N}$, so that $V(n) = \lambda W(n)$. Notice that $\Vert \sum_{i=1}^{n+1} e_i \Vert_{v,1} = V(n+1)$, while
$$\Vert T(\sum_{i=1}^{n+1} e_i)\Vert_{w,1}  = \lambda \Vert \sum_{i=1}^{n+1} \epsilon_i e_{\pi(i)}\Vert_{w,1}  =
\lambda W(n+1).$$
Hence, $V(n) + v_{n+1} = V(n+1) = \lambda W(n+1) = \lambda (W(n) + w_{n+1})$, which implies that $v_{n+1} = \lambda w_{n+1}$. 

Finally, it is easy to see that in case $v=\lambda w$,  every operator $T:d(v,1) \rightarrow d(w,1)$ defined by $ T(e_n) = \lambda\epsilon_n e_{\pi(n)}$ is an isometry.
\end{proof}

It is easy to check that the standard unitary vectors $(e_n)$ form a boundedly complete Schauder basis for $d(w,1)$, thus it admits a predual. It is well known that the predual of the Lorentz sequence spaces is:
\[
d_*(w,1) = \left\{x =(x_n) \in \R^\N: \lim_n \frac{1}{W(n)}\sum_{i=1}^{n} \widetilde x_i =0 \right\},
\]
endowed with the norm $\|(x_n)\|_W = \sup\limits_n \dfrac{\sum_{i=1}^{n} \widetilde x_i}{W(n)}$. The Lorentz sequence space is a particular case of the Köthe space $\mu_w$, and in the section 9 of \cite{garling1966symmetric}, the author gives a description of both the dual and the predual of $d(w,1)$ passing by the $\alpha$-dual space $\mu_w^\times$ (defined by Köthe and Toeplitz in \cite{koethe1934lineare}), although no explicit isometry is given and we couldn't find any references in the literature that describes the above identification via an isometry. On the other hand, on recent papers the dual and the predual of different constructions of Lorentz spaces are presented in \cite{kaminska2004mideal} and in \cite{ciesielskisequencelorentzstructure} and we decided to include a proof here for the sake of completeness:

\begin{prop}\label{prop: isometria predual lorentz}
    The operator  $\Phi_w: d_*(w,1)^* \rightarrow d(w,1)$ given by $\Phi_w(f) = ( f(e_n) )_{n\geq 1}$ is an isometry.
\end{prop}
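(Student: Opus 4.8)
The plan is to use the canonical basis $(e_n)$, which is a (symmetric, shrinking) Schauder basis of $d_*(w,1)$, to identify a functional $f\in d_*(w,1)^*$ with its coordinate sequence $a_n\doteq f(e_n)$, and then to show that the dual norm of $f$ coincides with the Lorentz norm $\|(a_n)\|_{w,1}=\sum_n\widetilde a_n w_n$. Linearity of $\Phi_w$ is immediate, and injectivity follows from the density of finitely supported vectors in $d_*(w,1)$, since then $f$ is determined by the numbers $a_n$. Surjectivity will be automatic: the estimate below shows that for $a\in d(w,1)$ the map $x\mapsto\sum_n a_n x_n$ converges absolutely, because $\sum_n|a_n x_n|\le\sum_n\widetilde a_n\widetilde x_n\le\|a\|_{w,1}\|x\|_W$, and defines a bounded functional whose coordinates recover $a$. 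Thus everything reduces to the single norm identity $\|f\|=\|(a_n)\|_{w,1}$, which I prove by two inequalities.

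For $\|(a_n)\|_{w,1}\le\|f\|$, which simultaneously certifies that $\Phi_w(f)\in d(w,1)$, I would test $f$ on well-chosen unit vectors of the predual. Fix $N$, pick indices $\sigma(1),\dots,\sigma(N)$ carrying the $N$ largest values $|a_n|$, so that $|a_{\sigma(1)}|,\dots,|a_{\sigma(N)}|$ are $\widetilde a_1,\dots,\widetilde a_N$, and set $x^{(N)}\doteq\sum_{n=1}^N\sgn(a_{\sigma(n)})\,w_n\,e_{\sigma(n)}$. Its decreasing rearrangement is $(w_1,\dots,w_N,0,0,\dots)$, so $\|x^{(N)}\|_W=1$ (in the defining supremum the ratio equals $1$ for $m\le N$ and is smaller for $m>N$), and $x^{(N)}$ is finitely supported, hence lies in $d_*(w,1)$. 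Then $f(x^{(N)})=\sum_{n=1}^N|a_{\sigma(n)}|w_n=\sum_{n=1}^N\widetilde a_n w_n$, whence $\sum_{n=1}^N\widetilde a_n w_n\le\|f\|$ for every $N$, and letting $N\to\infty$ gives the inequality. The point I expect to be the main obstacle lies exactly here: the ``obvious'' extremal vector $w=(w_n)$ does \emph{not} belong to $d_*(w,1)$, since $\frac{1}{W(n)}\sum_{i=1}^n w_i=1\not\to0$; this forces one to approximate through the finitely supported truncations $x^{(N)}$, each of predual norm exactly $1$, and to pass to the limit, rather than simply evaluating $f$ at $w$.

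For the reverse inequality $\|f\|\le\|(a_n)\|_{w,1}$, I would use that $\|f\|=\sup\{|f(x)|:x\text{ finitely supported},\,\|x\|_W\le1\}$ by density. For such $x$ one has $f(x)=\sum_n a_n x_n$ (a finite sum), and by the Hardy--Littlewood rearrangement inequality $|f(x)|\le\sum_n|a_n||x_n|\le\sum_n\widetilde a_n\widetilde x_n$. A summation by parts, using that $(\widetilde a_n)$ is nonincreasing and the bound $\sum_{i=1}^m\widetilde x_i\le\|x\|_W\,W(m)$, yields
\[
\sum_{n=1}^{N}\widetilde a_n\widetilde x_n\;\le\;\|x\|_W\sum_{n=1}^{N}\widetilde a_n w_n\;\le\;\|x\|_W\,\|(a_n)\|_{w,1},
\]
the first bound resting on the telescoping identity $\widetilde a_N W(N)+\sum_{n=1}^{N-1}(\widetilde a_n-\widetilde a_{n+1})W(n)=\sum_{n=1}^N\widetilde a_n w_n$. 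Hence $|f(x)|\le\|(a_n)\|_{w,1}\,\|x\|_W$, so $\|f\|\le\|(a_n)\|_{w,1}$. Combining the two inequalities proves $\|f\|=\|(a_n)\|_{w,1}$, so $\Phi_w$ is a surjective linear isometry. The only ingredients beyond the given definitions are that $(e_n)$ is a Schauder basis of $d_*(w,1)$ and the rearrangement inequality, both standard facts for Lorentz sequence spaces.
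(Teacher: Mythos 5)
Your proof is correct and follows essentially the same route as the paper's: for one inequality you test $f$ on the norm-one finitely supported vectors $\sum_{n\le N}\sgn(a_{\sigma(n)})\,w_n\,e_{\sigma(n)}$, and for the other you use Abel summation against the partial sums $S(n)\le \|x\|_W\,W(n)$, exactly as in the paper. The only cosmetic difference is that you select ``the $N$ largest values of $|a_n|$'', which presupposes that the decreasing rearrangement of $(a_n)$ exists before you know $(a_n)\in d(w,1)\subseteq c_0$; the paper sidesteps this by testing against arbitrary permutations $\pi$ and taking the supremum that defines $\|\cdot\|_{w,1}$, a repair you should fold in.
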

\begin{proof}
    We will check that $\Phi_w$ is well defined.  Fix $f\in d_*(w,1)^*$, denote $a_i = f(e_i)$ and for each $\pi\in S_\infty$ and $n\in\N$, consider $y(\pi,n) = \sum\limits_{i=1}^n \sgn(a_{\pi(i)})w_i e_{\pi(i)}$. Then $y(\pi,n)\in S_{d_*(w,1)}$ and:
\[
\forall n\in\N:\quad \sum_{i=1}^n |a_{\pi(i)}|w_i = f(y(\pi,n)) \leq \|f\|
\]
Thus, $\Phi_w$ well defined and $\|\Phi_w(f)\|\leq \|f\|$. On the other hand, if $y=(y_n)\in d(w,1)$, we will check that $f:d_*(w,1)\rightarrow \R$ by $f(x) = \sum\limits_{n=1}^\infty x_n y_n$, for $x=(x_n) \in d_*(w,1)$, is a bounded functional. 

If $x = (x_n) \in d_*(w,1)$, by the definition of the norm,  $S(n)\doteq \sum\limits_{i=1}^{n} \widetilde x_i\leq W(n)\|x\|_W $ for each $n\in \N$. Thus:
\begin{align*}
    \sum_{i=1}^n \widetilde y_i \, \widetilde x_i &=
        \sum_{i=1}^{n-1} \Big[\Big( \widetilde y_i - \widetilde y_{i+1}\Big)S(i)\Big] + \widetilde y_n S(n) \\
        &\leq \|x\|_W \sum_{i=1}^{n-1} \Big[\Big( \widetilde y_i - \widetilde y_{i+1}\Big)W(i)\Big] + \widetilde y_nW(n)\\
        &= \|x\|_W \sum_{i=1}^n \widetilde y_i w_i \leq \|x\|_W \|y\|_{w,1}.
\end{align*}

We conclude that $f$ is well defined and $\|f\| \leq \|y\|_{w,1}$. Therefore, $\Phi_w$ is a surjective isometry.
\end{proof}

The next result shows that the isometries between preduals of Lorentz spaces inherit the rigidity aspects of their dual spaces:

\begin{theo}\label{theo: isometria standard predual lorentz}
    Let $v,w\in c_0\setminus \ell_1$ be stritcly decreasing. If $\isom\big(d_*(v,1);d_*(w,1)\big)$ is non-empty, then there is $\lambda >0$ such that $v = \lambda w$ and $\isom\big(d_*(v,1);d_*(w,1)\big)$ is $\lambda$-standard. In particular, $d_*(w,1)$ has standard group of isometries.
\end{theo}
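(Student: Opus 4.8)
The plan is to reduce the statement about preduals to the already-proven Theorem~\ref{theo: iso dv,dw are induced by permutation and scalar} about the Lorentz spaces themselves, exploiting the duality isometry $\Phi_w$ established in Proposition~\ref{prop: isometria predual lorentz}. The key observation is that if $T\in\isom\big(d_*(v,1);d_*(w,1)\big)$ is a surjective isometry between the preduals, then its adjoint $T^*:d_*(w,1)^*\to d_*(v,1)^*$ is a surjective isometry between the duals. Composing with the identifications $\Phi_v$ and $\Phi_w$ from Proposition~\ref{prop: isometria predual lorentz}, the operator $\Phi_v\circ T^*\circ\Phi_w^{-1}:d(w,1)\to d(v,1)$ is a surjective isometry. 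Thus $\isom\big(d(w,1);d(v,1)\big)$ is non-empty, and Theorem~\ref{theo: iso dv,dw are induced by permutation and scalar} (applied with the roles of $v$ and $w$ interchanged) immediately yields a scalar $\mu>0$ with $w=\mu v$, equivalently $v=\lambda w$ for $\lambda=1/\mu$. This settles the proportionality claim.

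It remains to show that $\isom\big(d_*(v,1);d_*(w,1)\big)$ is $\lambda$-standard. First I would record that, once $v=\lambda w$, every operator $T(x_n)=(\lambda\epsilon_n x_{\pi(n)})$ is a surjective isometry $d_*(v,1)\to d_*(w,1)$: this is a direct computation with the predual norm $\|\cdot\|_W$, using that the decreasing rearrangement is invariant under permutations and sign changes of coordinates and that $V(n)=\lambda W(n)$ for all $n$. For the converse, I would take an arbitrary $T\in\isom\big(d_*(v,1);d_*(w,1)\big)$ and again pass to the adjoint. By the argument above, $\Phi_v\circ T^*\circ\Phi_w^{-1}$ is an isometry of the Lorentz spaces, hence by Theorem~\ref{theo: iso dv,dw are induced by permutation and scalar} it is induced by a scalar, a permutation $\pi$ and signs $(\epsilon_n)$, i.e. it sends $e_n\mapsto \mu\,\epsilon_n e_{\pi(n)}$ on the canonical basis of $d(w,1)$. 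Translating this back through $\Phi_v,\Phi_w$ and taking pre-adjoints, I would read off that $T$ itself must act diagonally with respect to the canonical bases, up to a permutation and signs, which is exactly the form \eqref{eq: standard group of isometries def}.

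The main obstacle I anticipate is the bookkeeping in the final translation step: while the adjoint correspondence is clean at the level of the abstract statement ``isometry exists,'' extracting the precise form of $T$ on basis vectors requires matching the canonical basis $(e_n)$ of the predual with the dual basis, checking that $\Phi_w$ intertwines the canonical vectors of $d_*(w,1)^*$ with those of $d(w,1)$ as its definition $\Phi_w(f)=(f(e_n))_n$ suggests, and confirming that $T^*e_n^*=\sum_m (Te_m)_n\, e_m^*$ really transports the diagonal-plus-permutation structure back to $T$ without introducing spurious coordinates. A clean way around most of this is to argue that a surjective isometry of $d_*(w,1)$ must preserve the disjointness structure of the canonical basis (the $(e_n)$ are, up to normalization, the extreme points of smallest support, or can be detected via the norm of sums), so that $T$ is forced to permute the basis with signs and a common scalar directly; I would use whichever of the two routes keeps the verification shortest, most likely the adjoint route since the hard analytic content is already contained in Theorem~\ref{theo: iso dv,dw are induced by permutation and scalar}.

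\begin{proof}
By Proposition~\ref{prop: isometria predual lorentz}, for any strictly decreasing weight $u\in c_0\setminus\ell_1$ the map $\Phi_u:d_*(u,1)^*\to d(u,1)$, $\Phi_u(f)=(f(e_n))_n$, is a surjective isometry, and it sends the biorthogonal functional $e_n^*$ of the canonical basis of $d_*(u,1)$ to the canonical vector $e_n$ of $d(u,1)$.

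Suppose $\isom\big(d_*(v,1);d_*(w,1)\big)\neq\emptyset$ and fix $T$ in it. Its adjoint $T^*:d_*(w,1)^*\to d_*(v,1)^*$ is a surjective isometry, so
\[
\widehat{T}\doteq \Phi_v\circ T^*\circ \Phi_w^{-1}:d(w,1)\longrightarrow d(v,1)
\]
is a surjective isometry. In particular $\isom\big(d(w,1);d(v,1)\big)\neq\emptyset$, and Theorem~\ref{theo: iso dv,dw are induced by permutation and scalar}, applied with $v$ and $w$ interchanged, produces $\mu>0$ with $w=\mu v$. Writing $\lambda\doteq \mu^{-1}$ we obtain $v=\lambda w$, which is the asserted proportionality. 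Moreover, the same theorem gives a permutation $\pi\in S_\infty$ and signs $(\eta_n)\in\{-1,1\}^\N$ with $\widehat{T}(e_n)=\mu\,\eta_n e_{\pi(n)}$ for every $n$.

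We translate this back to $T$. Using $\Phi_w(e_n^*)=e_n$ and $\Phi_v(e_{\pi(n)}^*)=e_{\pi(n)}$, the identity $\widehat{T}=\Phi_v\circ T^*\circ\Phi_w^{-1}$ yields $T^*(e_n^*)=\mu\,\eta_n e_{\pi(n)}^*$ for all $n$. Hence, for all $m,n\in\N$,
\[
e_m^*\big(T(e_n)\big)=\big(T^*e_m^*\big)(e_n)=\mu\,\eta_m\,e_{\pi(m)}^*(e_n)=\mu\,\eta_m\,\delta_{\pi(m),n}.
\]
Thus $e_m^*(T(e_n))\neq 0$ only when $n=\pi(m)$, i.e. $m=\pi^{-1}(n)$, and in that case it equals $\mu\,\eta_{\pi^{-1}(n)}$. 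Setting $\sigma\doteq \pi^{-1}$ and $\epsilon_n\doteq \eta_{\sigma(n)}$, we conclude $T(e_n)=\lambda'\,\epsilon_n e_{\sigma(n)}$ with $\lambda'=\mu$; rewriting the scalar consistently with $v=\lambda w$ via the normalization $\|T(e_n)\|_W=\|e_n\|_W$ gives $|\lambda'|=v_1/w_1=\lambda$, so $T$ has the form \eqref{eq: standard group of isometries def}. Therefore $\isom\big(d_*(v,1);d_*(w,1)\big)$ consists only of $\lambda$-standard operators.

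Conversely, assume $v=\lambda w$ and let $T(x_n)=(\lambda\epsilon_n x_{\pi(n)})$ for some signs $(\epsilon_n)$ and permutation $\pi$. For $x\in d_*(v,1)$ the sequence of absolute values of $Tx$ is a permutation of $\lambda(|x_n|)_n$, so $Tx$ and $\lambda x$ share the same decreasing rearrangement. Since $V(n)=\lambda W(n)$ for all $n$, we get
\[
\|Tx\|_W=\sup_n\frac{\sum_{i=1}^n \widetilde{(Tx)}_i}{W(n)}
=\sup_n\frac{\lambda\sum_{i=1}^n \widetilde x_i}{W(n)}
=\sup_n\frac{\sum_{i=1}^n \widetilde x_i}{\lambda^{-1}W(n)}
=\sup_n\frac{\sum_{i=1}^n \widetilde x_i}{V(n)}=\|x\|_{W}^{(v)},
\]
and the same computation shows $Tx\in d_*(w,1)$. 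As $T$ is clearly bijective, it is a surjective isometry. Hence every $\lambda$-standard operator belongs to $\isom\big(d_*(v,1);d_*(w,1)\big)$, and the group is $\lambda$-standard.

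Finally, taking $v=w$ (so $\lambda=1$) shows that $\isom\big(d_*(w,1)\big)$ is standard.
\end{proof}
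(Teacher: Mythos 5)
Your proposal follows the paper's own route exactly: pass to the adjoint $T^*$, conjugate by the identifications $\Phi_v,\Phi_w$ of Proposition~\ref{prop: isometria predual lorentz} to obtain a surjective isometry $d(w,1)\rightarrow d(v,1)$, invoke Theorem~\ref{theo: iso dv,dw are induced by permutation and scalar}, and transport the signed permutation back through the biorthogonal functionals via $e_m^*(T(e_n))=(T^*e_m^*)(e_n)$. You additionally verify the converse inclusion (that every operator of the prescribed form is an isometry of the preduals), which the paper leaves implicit; since ``$\lambda$-standard'' asserts an equality of sets, that is a worthwhile addition.

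The one genuine flaw is the scalar bookkeeping at the end, and it makes your two displayed computations inconsistent with each other. With $v=\lambda w$ one has $\lambda=v_1/w_1$ and $V(n)=\lambda W(n)$. Your adjoint computation correctly yields $T(e_n)=\mu\,\epsilon_n e_{\sigma(n)}$ with $\mu=w_1/v_1=1/\lambda$, and the normalization $\Vert T(e_n)\Vert_W=\Vert e_n\Vert_W$ confirms this: $\Vert e_n\Vert_W=1/v_1$ in $d_*(v,1)$ and $\Vert e_{\sigma(n)}\Vert_W=1/w_1$ in $d_*(w,1)$, so $|\mu|=w_1/v_1$, not $v_1/w_1$ as you assert. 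Correspondingly, in the converse verification the step $\lambda^{-1}W(n)=V(n)$ contradicts $V(n)=\lambda W(n)$: the map $x\mapsto(\lambda\epsilon_n x_{\pi(n)})$ scales the predual norm by $\lambda^2$, so the isometries between the preduals are induced by the scalar $1/\lambda=w_1/v_1$ rather than $\lambda$ when $\lambda\neq 1$. In fairness, the paper's own statement and proof carry the same reciprocal tension (the proof produces the scalar $w_1/v_1$ while the statement pairs ``$v=\lambda w$'' with ``$\lambda$-standard''), so this is a correctable normalization slip rather than a structural gap; but as written your argument asserts $|\lambda'|=v_1/w_1$ immediately after deriving $\lambda'=\mu=w_1/v_1$, and the final display is false for $\lambda\neq 1$.
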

\begin{proof}
    Let $T\in \isom\big(d_*(v,1);d_*(w,1)\big)$. Because the adjoint operator $T^*:d_*(w,1)^* \rightarrow d_*(w,1)^*$ is also an isometry, we define an isometry $S$ on $d(w,1)$ via the (commutative) diagram:
\begin{center}
    \begin{tikzcd}[column sep=1.2cm, row sep=1.1cm]
{d(v,1)}                       & {d(w,1)} \arrow[l, "S"', dashed]                 \\
{d_*(v,1)^*} \arrow[u, "\Phi_v"] & {d_*(w,1)^*} \arrow[l, "T^*"] \arrow[u, "\Phi_w"']
\end{tikzcd}
\end{center}
where $\Phi_v$ and $\Phi_w$ are given by Proposition~\ref{prop: isometria predual lorentz}. 

Notice that it follows from the definiton of $\Phi$ that $\Phi^{-1}(e_n) = e_n^*$ for all $n\in\N$ in each column of the diagram. By Theorem~\ref{theo: iso dv,dw are induced by permutation and scalar}, $S(z_n) = (\lambda\epsilon_n z_{\pi(n)})_{n\geq 1}$, where $\pi\in S_\infty$, $\epsilon\in \{-1,1\}^\N$ and $\lambda = \dfrac{w_1}{v_1}$. Denoting  $y=T(x)$, because the above diagram is commutative,  for all $k\in\N$:
\[
y_k = \Phi_w^{-1}(e_k)(y) = \Phi_w^{-1}(e_k) (T(x)) 
= \Big[T^*\circ \Phi_w^{-1}(e_k)\Big](x)
\]
\[=\Big[\Phi_v^{-1}\circ S(e_k)\Big](x)
= \Phi_v^{-1}(\lambda \epsilon_k e_{\pi(k)})(x)
= \epsilon_k x_{\pi(k)}.
\]

Thus, $T$ is induced by $\lambda$ and $\pi\in S_\infty$. In particular, when $v_1 = w_1$, we conclude that $v=w$ and that $d_*(w,1)$ has standard group of isometries.
\end{proof}
\bibliographystyle{plain}
\bibliography{references}
\end{document}